\documentclass[makeidx]{article}
\title{Lower bounds for the first eigenvalue of the magnetic Laplacian\\}
\author{Bruno Colbois and Alessandro Savo}
\date{\today}
\usepackage{makeidx}
\usepackage{amssymb , amsmath, amsthm}
\usepackage{graphicx}
\oddsidemargin 0cm
\evensidemargin 0cm
\parindent0cm
\textwidth 16cm
\newtheorem{defi}{Definition} 
\newtheorem{thm}[defi]{Theorem}
\newtheorem{ex}[defi]{Example}

 \newtheorem{prop}[defi]{Proposition}
\newtheorem{lemme}[defi]{Lemma}

\newcommand{\twosystem}[2]{\left\{\begin{aligned} &#1\\ &#2\end{aligned}\right.}
\newcommand{\threesystem}[3]{\left\{ \begin{aligned}&#1\\ &#2\\&#3\end{aligned}\right.}

\newcommand{\nero}{\smallskip$\bullet\quad$\rm}

\newcommand{\parte}[1]{\smallskip\noindent {\rm#1)}\,\,}

\newcommand{\due}[2]{#1&#2\\}

\newcommand{\matrice}{\begin{pmatrix}}
\newcommand{\ok}{\end{pmatrix}}
\newcommand{\twomatrix}[4]{\matrice\due {#1}{#2}\due{#3}{#4}\ok}

\newcommand{\derive}[2]{\dfrac{\bd #1}{\bd#2}}

\newcommand{\opd}[1]{\dfrac{\bd}{\bd #1}}

\newcommand{\Due}[2]{\begin{pmatrix}#1\\#2\end{pmatrix}}

\newcommand{\scal}[2]{\langle{#1},{#2}\rangle}

\newcommand{\abs}[1]{\lvert{#1}\rvert}

\newcommand{\reals}{{\bf R}}
\newcommand{\sphere}[1]{{\bf S}^{#1}}
\newcommand{\real}[1]{{\bf R}^{#1}}

\newcommand{\bd}{\partial}

\renewcommand{\l}{\lambda}

\begin{document}

\maketitle
\begin{abstract} 
 We consider a Riemannian cylinder $\Omega$ endowed with a closed potential $1$-form $A$ and study the magnetic Laplacian $\Delta_A$ with magnetic Neumann boundary conditions associated with those data. We establish a sharp lower bound for the first eigenvalue and show that the equality characterizes the situation where the metric is a product. We then look at the case of  a planar domain bounded by two closed curves and obtain an explicit lower bound in terms of the geometry of the domain. We finally discuss sharpness of this last estimate. 

\medskip

\noindent \it 2000 Mathematics Subject Classification. \rm 58J50, 35P15.

\noindent\it Key words and phrases. \rm Magnetic Laplacian, Eigenvalues, Upper and lower bounds, Zero magnetic field

\end{abstract}
\large

\section{Introduction} \label{intro}  Let $(\Omega,g)$ be a compact Riemannian manifold with boundary. Consider the trivial complex line bundle $\Omega\times\bf C$ over $\Omega$; its space of sections  can be identified with $C^{\infty}(\Omega,\bf C)$, the space of smooth complex valued functions on $\Omega$.  Given a smooth real 1-form $A$ on $\Omega$ we define a connection $\nabla^A$ on $C^{\infty}(\Omega,\bf C)$ as follows:
\begin{equation} \label{connection}
\nabla^A_Xu=\nabla_Xu-iA(X)u
\end{equation}
for all vector fields $X$ on $\Omega$ and for all $u\in C^{\infty}(\Omega,\bf C)$; here $\nabla$ is the Levi-Civita connection assocated to the metric $g$ of $\Omega$. The operator
\begin{equation} \label{magnetic laplacian}
\Delta_A=(\nabla^A)^{\star}\nabla ^A
\end{equation}
is called the {\it magnetic Laplacian} associated to the magnetic potential $A$, and the smooth two form 
$$
B=dA
$$
is the  associated {\it  magnetic field}. We will consider Neumann magnetic conditions, that is:
\begin{equation}\label{mneumann}
\nabla^A_Nu=0\quad\text{on}\quad\bd\Omega,
\end{equation}
where $N$ denotes the inner unit normal. 
Then, it is well-known that $\Delta_A$ is self-adjoint, and admits a discrete spectrum
$$
0\le \l_1(\Delta_A)\le \l_2(\Delta_A) \le ... \to \infty.
$$

The above  is a particular case of a more general situation, where $E\to M$ is a complex line bundle with a hermitian connection $\nabla^E$, and where the magnetic Laplacian is defined as $\Delta_E=(\nabla^E)^{\star}\nabla ^E$.

The spectrum of the magnetic Laplacian is very much studied in analysis (see for example \cite{BDP} and the references therein) and in relation with physics. For \emph{Dirichlet boundary conditions}, lower estimates of its fundamental tone have been worked out, in particular, when $\Omega$ is a planar domain and $B$ is the constant magnetic field; that is, when the function $\star B$ is constant on $\Omega$ (see for example a Faber-Krahn type inequality in \cite{Er1} and the recent\cite{LS} and the references therein, also for Neumann boundary condition). The case when the potential $A$ is a closed $1$-form is particularly interesting from  the physical point of view (Aharonov-Bohm  effect), and also from the geometric point of view. For Dirichlet boundary conditions, there is a serie of papers for domains with a pole, when the pole approaches the boundary (see \cite{AFNN, NNT} and the references therein). Last but not least, there is a Aharonov-Bohm approach to the question of  nodal and minimal partitions, see chapter 8 of \cite{BH}.

For \emph{Neumann boundary conditions}, we refer in particular to the paper \cite{HHHO}, where the authors study the multiplicity and the nodal sets corresponding to the ground state $\l_1$ for non-simply connected planar domains with harmonic potential (see the discussion below). 

Let  us also mention the recent article \cite{LLPP} (chapter 7) where the authors establish a \emph{Cheeger type inequality} for $\l_1$; that is, they find a lower bound for $\l_1(\Delta_A)$ in terms of the geometry of $\Omega$ and the potential $A$. In the preprint \cite{ELMP}, the authors approach the problem via the Bochner method.

Finally, in a more general context (see \cite{BBC}) the authors establish a lower bound for $\l_1(\Delta_A)$ in terms of the \emph{holonomy} of the vector bundle on which $\Delta_A$ acts. In both cases, implicitly, the flux of the potential $A$ plays a crucial role.

\nero From now on we will denote by $\lambda_1(\Omega,A)$ the first eigenvalue of $\Delta_A$ on $(\Omega,g)$.

 
\smallskip\subsection{Main lower bound}
Our lower bound is partly inspired by the results in \cite{HHHO} for plane domains.  First, recall that if $c$ is a closed parametrized curve (a loop), the quantity:
$$
\Phi^A_c=\dfrac{1}{2\pi}\oint_{c}A
$$
is called the {\it flux} of $A$ across $c$. (We assume that $c$ is travelled once, and we will not  specify the orientation of the loop, so that the flux will only be defined up to sign: this will not affect any of the statements, definitions or results which we will prove in this paper). Let then $\Omega$ be a fixed plane domain with one hole, and let $\Phi^A$ be the flux of the harmonic potential $A$ across the inner boundary curve. 
In Theorem 1.1 of \cite{HHHO} it is first remarked that $\lambda_1(\Omega, A)$ is positive if and only if $\Phi^A$ is not an integer
(but see the precise statement in Section 2.1 below).  Then,  it is shown that $\lambda_1(\Omega,A)$ is maximal precisely when $\Phi^A$ is congruent to $\frac 12$ modulo integers. 
The proof  relies on a delicate argument involving the nodal line of a first eigenfunction; in particular, the conclusion does not follow from a specific comparison argument, or from an explicit lower bound. 

\smallskip

In this paper we give a geometric lower bound of $\lambda_1(\Omega,A)$ when $\Omega$ is, more generally, a 
 {\it Riemannian cylinder}, that is,  a  domain $(\Omega,g)$ diffeomorphic to
$[0,1]\times\sphere 1$ endowed with a Riemannian metric $g$, and when $A$ is a closed potential $1$-form : hence, the magnetic field $B$ associated to $A$ is equal to $0$. The lower bound will depend on the geometry of $\Omega$ and,  in an explicit way,  on the flux of the potential $A$.

\medskip

Let us write $\partial\Omega=\Sigma_1\cup\Sigma_2$ where
$$
\Sigma_1=\{0\}\times\sphere 1, \quad \Sigma_2=\{1\}\times\sphere 1.
$$
We will need to foliate the cylinder by the (regular) level curves of a smooth function $\psi$ and then we introduce the following family of functions.

$$
\begin{aligned}{\cal F}_{\Omega}=\{\psi:\Omega\to\reals: \quad &\text{\it $\psi$ is constant on each boundary component}\\
&\text{\it and has no critical points inside $\Omega$}\}
\end{aligned}
$$

\medskip

As 
$\Omega$ is a cylinder, we see that ${\cal F}_{\Omega}$ is not empty. 
If $\psi\in{\cal F}_{\Omega}$, we set:
$$
K=K_{\Omega,\psi}=\dfrac{\sup_{\Omega}\abs{\nabla\psi}}{\inf_{\Omega}\abs{\nabla\psi}}.
$$
It is clear that, in the definition of the constant $K$, we can assume that the range of $\psi$ is the interval $[0,1]$, and that $\psi=0$ on $\Sigma_1$ and $\psi=1$ on $\Sigma_2$. 
Note that  the level curves of the function $\psi$ are all smooth, closed and connected; moreover they are all homotopic to each other so that the flux of a closed $1$-form $A$ across any of them is the same, and will be denoted by $\Phi^A$. 

We say, briefly, that $\Omega$ is {\it $K$-foliated by the level curves of $\psi$.} 
We also denote by $d(\Phi^A,{\bf Z})$ the minimal distance between $\Phi^A$ and the set of integer $\bf Z$:
$$
d(\Phi^A,{\bf Z})^2=\min\Big\{(\Phi^A-k)^2: k\in\bf Z\Big\}.
$$

\medskip
Finally, we say that {\it $\Omega$ is a Riemannian product} if it is isometric to $[0,a]\times\sphere 1(R)$ for suitable positive constants  $a,R$.

\begin{thm} \label{main3} \item a) Let $(\Omega,g)$ be a Riemannian cylinder, and let $A$ be a closed $1$-form on $\Omega$. Assume that $\Omega$ is $K$-foliated by the level curves of the smooth function $\psi\in{\cal F}_{\Omega}$. Then:
\begin{equation}\label{cylinder}
\lambda_1(\Omega,A)\geq\dfrac{4\pi^2}{KL^2}\cdot d(\Phi^A,{\bf Z})^2,
\end{equation}
where $L$ is the maximum length of a level curve of $\psi$ and $\Phi^A$ is the flux of $A$ across any of the boundary components of $\Omega$.

\item b) Equality holds if and only if the cylinder $\Omega$  is  a Riemannian product.
\end{thm}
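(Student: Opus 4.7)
The plan is to slice $\Omega$ by the level curves of $\psi$ and combine a sharp one-dimensional magnetic eigenvalue inequality on each slice with the coarea formula. The required $1$-D fact is: if $\gamma$ is a circle of length $\ell$ and $A_\gamma$ is a $1$-form on $\gamma$ with flux $\Phi$, then for every $v\in H^1(\gamma,\mathbb C)$,
\[
\int_\gamma |\nabla^{A_\gamma} v|^2 \,\geq\, \frac{4\pi^2}{\ell^2}\,d(\Phi,\mathbb Z)^2 \int_\gamma |v|^2,
\]
with equality exactly when $v$ is a first magnetic eigenfunction on $\gamma$. This follows by gauge-transforming $A_\gamma$ to the constant form $(2\pi\Phi/\ell)\,ds$ (possible because both forms share the same flux) and expanding $v$ in the Fourier basis $\{e^{2\pi iks/\ell}\}_{k\in\mathbb Z}$; the minimum $\min_k(k-\Phi)^2=d(\Phi,\mathbb Z)^2$ then produces the stated constant.

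For part (a), fix a first eigenfunction $u$ and decompose $\nabla^A u$ into its components tangential and normal to the level curves of $\psi$. The pointwise bound $|\nabla^A u|^2\geq|(\nabla^A u)_T|^2$ combined with the coarea formula gives
\[
\int_\Omega |\nabla^A u|^2\,dV \,\geq\, \frac{1}{\sup_\Omega|\nabla\psi|}\int_0^1\!\int_{\Sigma_t}|(\nabla^A u)_T|^2\,d\sigma\,dt \,\geq\, \frac{4\pi^2 d(\Phi^A,\mathbb Z)^2}{L^2\sup_\Omega|\nabla\psi|}\int_0^1\!\int_{\Sigma_t}|u|^2\,d\sigma\,dt,
\]
where in the second step I apply the $1$-D lemma on each $\Sigma_t$, using $L_t\leq L$ and the fact that the flux of the closed form $A$ around every $\Sigma_t$ equals $\Phi^A$ by homotopy invariance. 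A second use of coarea converts $\int_0^1\!\int_{\Sigma_t}|u|^2\,d\sigma\,dt = \int_\Omega|u|^2|\nabla\psi|\,dV\geq \inf_\Omega|\nabla\psi|\int_\Omega|u|^2\,dV$, and dividing by $\int_\Omega|u|^2\,dV$ produces $\lambda_1(\Omega,A)\geq \frac{4\pi^2}{KL^2}d(\Phi^A,\mathbb Z)^2$.

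For part (b), the converse direction is a direct separation-of-variables computation on $[0,a]\times\sphere 1(R)$, so the content lies in the forward direction. Tracking equality throughout the chain above, I would extract four conditions: (i) $|\nabla\psi|$ is constant on $\Omega$; (ii) every level curve of $\psi$ has the same length $L$; (iii) the magnetic normal derivative $\nabla^A_\nu u$ vanishes identically on $\Omega$; and (iv) the restriction $u|_{\Sigma_t}$ is a first $1$-D magnetic eigenfunction on each $\Sigma_t$. Since such $1$-D eigenfunctions have constant modulus, $|u|$ depends only on $t$; the identity $\partial_\nu|u|^2 = 2\operatorname{Re}(\bar u\,\nabla^A_\nu u)=0$ then forces $|u|$ to be globally constant, so $u$ never vanishes and $g:=u/|u|$ defines a global unit gauge. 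Gauge-transforming by $g$ turns the eigenfunction into a positive constant and converts $A$ into a closed $1$-form $\tilde A$ that is coclosed (from $\Delta_{\tilde A}c=\lambda_1 c$), tangential on $\partial\Omega$ (from the magnetic Neumann condition), and of constant norm $|\tilde A|^2=\lambda_1$.

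I expect the hardest step to be the last one: upgrading the existence of such a harmonic $1$-form into the product structure of the metric. Because $\tilde A$ is tangential on $\partial\Omega$, the period of $*\tilde A$ over each boundary loop vanishes, so $*\tilde A=d\tilde h$ for a globally defined harmonic function $\tilde h$ that is constant on each boundary component and has constant nonzero gradient norm (if $d(\Phi^A,\mathbb Z)=0$ there is nothing to prove). Introducing coordinates $(r,s)$ with $r$ the distance from $\Sigma_1$ (affinely proportional to $\tilde h$) and $s$ arclength on $\Sigma_1$ extended along the orthogonal geodesics, the metric becomes $dr^2+\phi(r,s)^2\,ds^2$; the harmonicity of $\tilde h$ then forces $\partial_r\phi\equiv 0$, and the initial condition $\phi(0,s)=1$ yields $\phi\equiv 1$, identifying $\Omega$ isometrically with $[0,a]\times\sphere 1(L/2\pi)$. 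The principal delicate points in this chain are the global existence of the gauge $u/|u|$ (which rests on the constancy of $|u|$, and hence on the $1$-D rigidity), the topological step $*\tilde A=d\tilde h$, and the passage from a function of constant gradient norm to a product metric via harmonicity.
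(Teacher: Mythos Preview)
Your proof of part (a) is correct and coincides with the paper's argument: slice by the level curves of $\psi$, use the exact value of $\lambda_1$ on each circle, and apply the coarea formula with the bounds $B_1\le|\nabla\psi|\le B_2$.

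For part (b), your approach is different from the paper's and is elegant when it works, but it has a genuine gap in the case $\Phi^A\equiv\tfrac12\pmod{\mathbb Z}$. Your claim that ``such $1$-D eigenfunctions have constant modulus'' is exactly where things break: when the flux is a half-integer, the first eigenvalue of the magnetic Laplacian on a circle of length $L$ has multiplicity two, and a generic element of the eigenspace (e.g.\ $e^{i\pi s/L}+e^{-i\pi s/L}=2\cos(\pi s/L)$) does \emph{not} have constant modulus. Thus you cannot conclude that $|u|$ is constant on each $\Sigma_t$, the gauge $g=u/|u|$ may fail to be globally defined, and the whole chain ``constant eigenfunction $\Rightarrow$ harmonic $\tilde A$ of constant norm $\Rightarrow$ harmonic $\tilde h$ with $|\nabla\tilde h|$ constant $\Rightarrow$ product'' collapses. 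Even on the genuine product $[0,a]\times\sphere 1(L/2\pi)$ with half-integer flux there exist first eigenfunctions with non-constant modulus, so one cannot hope to fix the argument by an a priori choice of eigenfunction without further work.

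The paper identifies this half-integer case as the hardest and treats it by a completely different route: after passing to normal coordinates $dr^2+\theta(r,t)^2\,dt^2$ and gauging so that $A=H(t)\,dt$, condition (iii) gives $u=u(t)$; condition (iv) then says that for every $r$ the single function $u(t)$ lies in the two-dimensional first eigenspace of the circle $\Sigma_r$, producing an identity of the form $F(t)=p(r)\cos(\pi s(r,t)/L)+q(r)\sin(\pi s(r,t)/L)$ with $s(r,t)=\int_0^t\theta(r,\tau)\,d\tau$. A calculus lemma (Lemma~\ref{calculus}) then forces $s(r,t)=t$, i.e.\ $\theta\equiv 1$. Your Hodge-theoretic argument handles the simple case $\Phi^A\not\equiv\tfrac12$ more cleanly than the paper's (it bypasses the calculus lemma entirely), but to cover the full statement you still need to address the multiplicity-two situation.
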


\nero It is clear that we can also state the lower bound as follows:
$$
\lambda_1(\Omega,A)\geq\dfrac{4\pi^2}{\tilde K_{\Omega}}\cdot d(\Phi^A,{\bf Z})^2,
$$
where $\tilde K_{\Omega}$ is an invariant depending only $\Omega$:
$$
\tilde K_{\Omega}=\inf_{\psi\in{\cal F}_{\Omega}}K_{\Omega,\psi}L_{\psi}^2\quad\text{and}\quad L_{\psi}=\sup_{r\in {\rm range}(\psi)}\abs{\psi^{-1}(r)}.
$$
It is is not always easy to estimate $K$. In Section \ref{estimate K} we will show how to estimate $K$   in terms of the metric tensor. Note that $K\geq 1$; we will see that in many interesting situations (for example, for revolution cylinders, or for smooth embedded tubes around a closed curve) one has in fact $K=1$. 

\subsection{Doubly connected planar domains}

We now  estimate the constant $K$ above when $\Omega$ is an annular region in the plane, bounded by the inner curve $\Sigma_1$ and the outer curve $\Sigma_2$. 

\nero We assume that the inner curve $\Sigma_1$ is convex.

\medskip

From each point $x\in\Sigma_1$, consider the ray $\gamma_x(t)=x+tN_x$, where $N_x$ is the exterior normal to $\Sigma_1$ at $x$ and $t\geq 0$.  
Let $Q(x)$ be the first intersection of $\gamma_x(t)$ with $\Sigma_2$, and let
$$
r(x)=d(x,Q(x)).
$$
We say that $\Omega$ is {\it starlike with respect to $\Sigma_1$} if the map $x\to Q(x)$ is a bijection between $\Sigma_1$ and $\Sigma_2$; equivalently, if given any point $y\in\Sigma_2$, the geodesic segment which minimizes distance from $y$ to $\Sigma_1$ is entirely contained in  $\Omega$. 

For $x\in\Sigma_1$, we denote by $\theta_x$ the angle between $\gamma'_x$ and the outer normal  to $\Sigma_2$ at the point $Q(x)$, and we let
$$
m\doteq\min_{x\in\Sigma_1}{\cos\theta_x}.
$$
Note that as $\Omega$ is starlike w.r.t. $\Sigma_1$, one has $\theta_x\in [0,\frac{\pi}2]$ and then $m\geq 0$. 

\nero To have a positive lower bound, we will assume that $m>0$
(that is, $\Omega$ is {\it strictly} starlike w.r.t. $\Sigma_1$). 

\medskip

We also define
\begin{equation}\label{annulus}
\twosystem
{\beta=\min\{r(x): x\in\Sigma_1\}}
{B=\max\{r(x): x\in\Sigma_1\}}
\end{equation}

We then have the following result.

\begin{thm} \label{main2} Let $\Omega$ be an annulus in $\real 2$, which is strictly-starlike with respect to its inner (convex) boundary component $\Sigma_1$.   Assume that $A$ is a closed potential having flux $\Phi^A$ around $\Sigma_1$. Then:
$$
\lambda_1(\Omega,A)\geq \dfrac{4\pi^2}{L^2} \dfrac{\beta m}{B} d(\Phi^A,{\bf Z})^2
$$
where $\beta$ and $B$ are as in \eqref{annulus}, and $L$ is the length of the outer boundary component. If $\Sigma_2$ is also convex, then $m\geq \beta/B$ and the lower bound takes the form:
$$
\lambda_1(\Omega,A)\geq \dfrac{4\pi^2}{L^2} \dfrac{\beta^2}{B^2} d(\Phi^A,{\bf Z})^2.
$$
\end{thm}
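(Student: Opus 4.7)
The plan is to apply Theorem~\ref{main3} after parametrizing $\Omega$ via Fermi coordinates based on the inner convex curve $\Sigma_1$. Let $x(s)$ be an arclength parametrization of $\Sigma_1$, with outer unit normal $N(s)$, and set $F(s,t)=x(s)+tN(s)$ for $t\in[0,r(x(s))]$. Convexity of $\Sigma_1$ combined with the starlike hypothesis makes $F$ a diffeomorphism onto $\Omega$, and the Euclidean metric pulls back to $g=(1+t\kappa(s))^2\,ds^2+dt^2$ with $\kappa(s)\ge 0$. As foliating function I would take
\[
\psi(F(s,t))=\frac{t}{r(s)},
\]
which lies in $\mathcal F_{\Omega}$ since $\partial_t\psi=1/r(s)>0$ rules out critical points.

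A direct computation gives
\[
|\nabla\psi|^2 = \frac{1}{r(s)^2}+\frac{t^2\,r'(s)^2}{(1+t\kappa(s))^2\,r(s)^4}.
\]
Differentiating $Q(s)=x(s)+r(s)N(s)$ (a parametrization of $\Sigma_2$) and imposing that $Q'(s)$ be tangent to $\Sigma_2$ at $Q(s)$ yields $|r'(s)|=(1+r\kappa)\tan\theta_{x(s)}$ and $|Q'(s)|=(1+r\kappa)/\cos\theta_{x(s)}$. Since $t\mapsto t^2/(1+t\kappa)^2$ is increasing on $[0,r(s)]$, the second term of $|\nabla\psi|^2$ is maximized at $t=r(s)$, whence
\[
\frac{1}{r(s)^2}\le|\nabla\psi|^2\le\frac{1}{r(s)^2\cos^2\theta_{x(s)}},
\]
and consequently $K_{\Omega,\psi}\le B/(\beta m)$. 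Similarly, the level curve $s\mapsto F(s,cr(s))$ has squared speed $(1+cr\kappa)^2+c^2r'^2\le(1+r\kappa)^2+r'^2=|Q'(s)|^2$ uniformly for $c\in[0,1]$ (using $\kappa\ge 0$), so its length is bounded by $|\Sigma_2|=L$. Plugging these into Theorem~\ref{main3} produces the first inequality.

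For the second inequality, assuming $\Sigma_2$ convex, I would prove $m\ge\beta/B$ by a support-line argument resting on the identity $\beta=d(\Sigma_1,\Sigma_2)$. For the latter, a Euclidean minimizing pair $(x^*,y^*)$ satisfies $y^*-x^*=\pm d\,N_{x^*}$ by first-order optimality; the inward-pointing sign is excluded because the inward normal through $x^*$ exits the convex disk bounded by $\Sigma_1$ at a second point $x^{**}$ with $|x^{**}-y^*|<|x^*-y^*|$, contradicting minimality. Hence $y^*$ lies on the outward ray from $x^*$ and $d(\Sigma_1,\Sigma_2)=r(x^*)\ge\beta$, the reverse inequality being trivial. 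Now fix $x\in\Sigma_1$ and let $\ell_x$ be the tangent to $\Sigma_2$ at $Q(x)$, which supports the convex region bounded by $\Sigma_2$. The perpendicular segment from $x$ to its foot $p$ on $\ell_x$, of length $d(x,\ell_x)=r(x)\cos\theta_x$, either meets $\Sigma_2$ before reaching $p$ (its length is then $\ge d(\Sigma_1,\Sigma_2)=\beta$), or else $p$ is forced to lie in $\Sigma_2\cap\ell_x$ since $\ell_x$ is disjoint from the open interior of $\Sigma_2$ (again length $\ge\beta$). Either way $r(x)\cos\theta_x\ge\beta$, so $\cos\theta_x\ge\beta/B$ and $m\ge\beta/B$, which substituted into the first inequality yields the claim.

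The bulk of the work is routine Fermi-coordinate bookkeeping together with the two elementary monotonicity inequalities. The only delicate point is the support-line step above: identifying $\beta$ with the Euclidean distance $d(\Sigma_1,\Sigma_2)$ (which requires ruling out the pathological inward-normal minimizer using that the disk bounded by $\Sigma_1$ is disjoint from $\Sigma_2$), and then carefully tracking how the perpendicular from $x$ interacts with $\Sigma_2$ and its support line $\ell_x$.
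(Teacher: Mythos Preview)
Your proof is correct and follows essentially the same route as the paper: the same Fermi-coordinate parametrization, the same foliating function $\psi=t/r(s)$, and the same support-line argument for $m\ge\beta/B$. Your monotonicity arguments are slightly streamlined (you bound $|\nabla\psi|^2$ directly via monotonicity of $t^2/(1+t\kappa)^2$ and the identity $|r'|=(1+r\kappa)\tan\theta_x$, and you bound level-curve lengths pointwise rather than by differentiating the length integral), and you supply the justification $\beta=d(\Sigma_1,\Sigma_2)$ that the paper leaves implicit, but the overall strategy is identical.
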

In section \ref{sharpness}, we will explain why we need to control $\dfrac{\beta}{B}$, $L$, and why we need to impose the starlike condition.
If $\beta=B$ and $\Sigma_2$ is the circle of length $L$ we get the estimate
$$
\lambda_1(\Omega,A)\geq \dfrac{4\pi^2}{L^2} d(\Phi^A,{\bf Z})^2
$$
which is the first eigenvalue of the magnetic Laplacian on the circle with potential $A$ (see section \ref{riemannian circle}). 
If $\Sigma_2$ and $\Sigma_1$ are two concentric circles of respective lengths $L$ and $L_{\epsilon} \to L$, the domain is a thin annulus with $\lambda_1 \to \dfrac{4\pi^2}{L^2} d(\Phi^A,{\bf Z})^2$ which shows that our estimate is sharp.

\medskip
Our aim is to use these estimates on cylinders as a basis stone in order to study the same type of questions on compact surfaces of higher genus.


\section{Proof of the main theorem}

\subsection{Preliminary facts and notation} \label{preliminary} First, we recall the variational definition of the spectrum. Let $\Omega$ be a compact manifold with boundary and $\Delta_A$ the magnetic Laplacian with Neumann boundary conditions.  One verifies that
$$
\int_{\Omega}(\Delta_Au)\bar u=\int_{\Omega}\abs{\nabla^Au}^2,
$$
and the associated quadratic form is then
$$
Q_A(u)=\int_{\Omega}\abs{\nabla^Au}^2.
$$
The usual variational characterization gives:

\begin{equation}
\lambda_1(\Omega,A)= \min\Big\{ \frac{Q_A(u)}{\Vert u\Vert^2}:\ u\in C^{1}(\Omega,\mathbb C) / \{0\}\Big\}
\end{equation}

\medskip
The following proposition (which is well-known) expresses the {\it gauge invariance} of the spectrum of the magnetic Laplacian.
\begin{prop}  \label{basic facts}
\parte a The spectrum of $\Delta_A$ is equal to the spectrum of $\Delta_{A+d\phi}$ for all smooth real valued functions $\phi$; in particular, when $A$ is exact, the spectrum of $\Delta_A$ reduces to that of the classical Laplace-Beltrami operator acting on functions (with Neumann boundary conditions if $\bd\Omega$ is not empty).

\parte b If $A$ is a closed $1$-form, then $A$ is gauge equivalent to a unique (harmonic) $1$-form $\tilde A$ satisfying
$$
\twosystem
{d\tilde A=\delta\tilde A=0\quad\text{on}\quad \Omega}
{\tilde A(N)=0\quad\text{on}\quad \bd\Omega}
$$
The form $\tilde A$ is often called the {\rm Coulomb gauge} of $A$. Note that $\tilde A$ is the harmonic representative of $A$ for the absolute boundary conditions. 
\end{prop}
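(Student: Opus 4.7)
My plan for part (a) is to exhibit an explicit unitary intertwiner between $\Delta_A$ and $\Delta_{A+d\phi}$. I would define the multiplication operator $U_\phi\colon u\mapsto e^{i\phi}u$ on $C^\infty(\Omega,\C)$ and compute, for any vector field $X$,
\begin{equation*}
\nabla^{A+d\phi}_X(e^{i\phi}u)=\nabla_X(e^{i\phi}u)-i\bigl(A(X)+d\phi(X)\bigr)e^{i\phi}u=e^{i\phi}\bigl(\nabla_X u-iA(X)u\bigr)=e^{i\phi}\nabla^A_Xu.
\end{equation*}
Thus $\nabla^{A+d\phi}\circ U_\phi=U_\phi\circ\nabla^A$. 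Since $\abs{e^{i\phi}}=1$, the operator $U_\phi$ is unitary on $L^2(\Omega,\C)$; taking $X=N$ in the identity above shows that the magnetic Neumann condition $\nabla^A_Nu=0$ is carried to $\nabla^{A+d\phi}_N(e^{i\phi}u)=0$. Taking adjoints and composing gives $\Delta_{A+d\phi}=U_\phi\Delta_AU_\phi^{-1}$ as self-adjoint operators with equal spectra. The particular case where $A=d\phi$ is exact follows by applying the gauge transformation $\phi\mapsto-\phi$: this reduces $\nabla^A$ to the trivial connection $\nabla$, and $\Delta_A$ to the usual Laplace--Beltrami operator $\Delta_0$ with Neumann conditions.

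For part (b) I would invoke Hodge theory for manifolds with boundary (the Hodge--Morrey--Friedrichs decomposition). Since $A$ is closed, its absolute de Rham cohomology class on $\Omega$ contains a unique harmonic representative $\tilde A$ characterized by
\begin{equation*}
d\tilde A=\delta\tilde A=0\quad\text{on }\Omega,\qquad i_N\tilde A=0\quad\text{on }\bd\Omega,
\end{equation*}
where $i_N\tilde A=\tilde A(N)$ is precisely the normal (absolute) boundary condition for a $1$-form. Because $A$ and $\tilde A$ belong to the same absolute cohomology class, their difference is exact: there exists a smooth real function $\phi$ with $A-\tilde A=d\phi$, and then $A=\tilde A+d\phi$ exhibits $A$ as gauge equivalent to $\tilde A$. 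Uniqueness of $\tilde A$ is exactly the uniqueness of the harmonic representative in Hodge's theorem.

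The only non-trivial input is the Hodge decomposition with absolute boundary conditions, which I would quote as a standard fact rather than reprove. Beyond that, the whole statement is a formal computation with the multiplication operator $U_\phi=e^{i\phi}$, and the main point to emphasize is that this single substitution simultaneously intertwines the connections and preserves the magnetic Neumann boundary condition, so that no separate analytic work is needed to identify domains or quadratic forms.
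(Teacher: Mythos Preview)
Your proof of part (a) is correct and matches the paper's approach: the paper states the single identity $\Delta_A e^{-i\phi}=e^{-i\phi}\Delta_{A+d\phi}$ and concludes unitary equivalence, which is exactly your intertwining relation $\Delta_{A+d\phi}=U_\phi\Delta_A U_\phi^{-1}$ written from the other side. You add the explicit check that the magnetic Neumann condition is preserved, which the paper leaves implicit.

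For part (b) your argument is correct but takes a different route. You invoke the Hodge--Morrey--Friedrichs decomposition as a black box to obtain the harmonic representative $\tilde A$ with absolute boundary conditions. The paper instead proceeds constructively: it solves the scalar Neumann problem
\[
\Delta\phi=\delta A\quad\text{on }\Omega,\qquad \dfrac{\partial\phi}{\partial N}=A(N)\quad\text{on }\partial\Omega,
\]
and sets $\tilde A=A-d\phi$; one then checks directly that $d\tilde A=0$, $\delta\tilde A=\delta A-\Delta\phi=0$, and $\tilde A(N)=A(N)-\partial\phi/\partial N=0$, with uniqueness coming from the fact that $\phi$ is determined up to an additive constant. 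The paper's approach is more elementary and self-contained (it only needs solvability of the Neumann problem for functions, not the full Hodge theory for forms with boundary), while yours places the result in its natural cohomological context and makes the phrase ``harmonic representative for the absolute boundary conditions'' transparent. Either is acceptable; the paper's version has the advantage that a reader can verify every claim by hand.
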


\begin{proof} 
\parte a  This comes from the fact that
$
\Delta_A e^{-i\phi}=e^{-i\phi} \Delta_{A+d\phi}
$
hence $\Delta_A$ and $\Delta_{A+d\phi}$ are unitarily equivalent.

\parte b Consider a solution $\phi$ of the problem:
$$
\twosystem{\Delta\phi=\delta A \quad\text{on}\quad \Omega,}
{\derive{\phi}{N}=A(N) \quad\text{on}\quad \bd \Omega.}
$$
Then one checks that $\tilde A=A-d\phi$ is a Coulomb gauge of $A$. As $\phi$ is unique up to an additive constant, $d\phi$, hence $\tilde A$, is unique. 
\end{proof}


We now focus on the first eigenvalue. 
Clearly, if $A=0$,  then $\lambda_1(\Omega,A)=0$ simply because $\Delta_A$ reduces to the usual Laplacian, which has first eigenvalue equal to zero and first eigenspace spanned by the constant functions. If $A$ is exact, then $\Delta_{A}$ is unitarily equivalent to $\Delta$, hence, again, $\lambda_1(\Omega,A)=0$. In fact one checks easily from the definition of the connection  that, if $A=d\phi$ for some real-valued function $\phi$ then
$
\nabla^{A}e^{i\phi}=0,
$
which means that $u=e^{i\phi}$ is $\nabla^A$-parallel hence $\Delta_A$-harmonic. 
On the other hand, if the magnetic field $B=dA$ is non-zero then $\lambda_1(\Omega,A)>0$.  

\smallskip
It then remains to examine the case when $A$ is closed but not exact. 
The situation was clarified in \cite{Sh} for closed manifolds and in \cite{HHHO} for Neumann boundary conditions. 

\begin{thm}\label{shikegawa}The following statements are equivalent:

\item a)
$\lambda_1(\Omega,A)=0$;

\item b)
$dA=0$ and $\Phi^A_c\in\bf Z$ for any closed curve $c$ in $\Omega$.
\end{thm}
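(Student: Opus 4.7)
The plan is to prove the two directions separately. For the implication $(b) \Rightarrow (a)$, I want to exhibit an explicit $\nabla^A$-parallel nowhere-zero section $u$, which automatically gives $\Delta_A u = 0$ with $u \not\equiv 0$ and trivially satisfies the magnetic Neumann condition $\nabla^A_N u = 0$, hence $\lambda_1(\Omega,A) = 0$. Since $A$ is closed, on any simply connected open set $U \subset \Omega$ we can write $A = d\phi_U$ and define $u_U = e^{i\phi_U}$; a direct computation gives $\nabla^A u_U = du_U - iA u_U = i\, d\phi_U\, e^{i\phi_U} - iA e^{i\phi_U} = 0$. The only question is whether these local sections glue into a well-defined smooth function on $\Omega$. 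Fixing a base point and defining $\phi(x) = \int_{x_0}^x A$ along any path, the value is well-defined modulo the periods $\oint_c A = 2\pi \Phi^A_c$; under assumption (b) these are in $2\pi\mathbb{Z}$, so $u = e^{i\phi}$ is globally single-valued.

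For the converse $(a) \Rightarrow (b)$, I take $u$ a first eigenfunction with $\lambda_1(\Omega,A) = 0$. The variational characterization gives $Q_A(u) = \int_\Omega |\nabla^A u|^2 = 0$, hence $\nabla^A u \equiv 0$ on $\Omega$, that is $du = iAu$. The key observation is that $|u|$ is constant: indeed
\begin{equation*}
d|u|^2 = \bar{u}\, du + u\, d\bar{u} = \bar{u}(iAu) + u(\overline{iAu}) = iA|u|^2 - iA|u|^2 = 0.
\end{equation*}
Since $u \not\equiv 0$, this forces $|u| \equiv c > 0$, so $u$ is nowhere vanishing.

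Because $u$ is nowhere zero, locally we can write $u = c e^{i\phi}$ with $\phi$ smooth and real-valued on any simply connected open set. Substituting into $du = iAu$ yields $d\phi = A$ locally, which immediately gives $dA = 0$. For the flux condition, fix any closed loop $c$ in $\Omega$ and cover it by simply connected charts on which $A = d\phi_k$; tracking the monodromy, the sum of increments $\oint_c A$ equals the total change of a continuous lift of $\phi$ along $c$. Single-valuedness of $u = c e^{i\phi}$ forces this total change to lie in $2\pi \mathbb{Z}$, hence $\Phi^A_c \in \mathbb{Z}$.

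The main (minor) obstacle is the bookkeeping in the converse: making rigorous the passage from the local potentials $\phi_k$ to a well-defined monodromy on an arbitrary loop, which amounts to the standard lifting argument for the universal cover. Everything else is algebraic. One could alternatively package the whole argument by noting that $\nabla^A$-parallel sections correspond to flat sections of the Hermitian line bundle determined by $A$, which exist globally iff the holonomy representation is trivial, i.e., iff all fluxes are integers; but the direct approach above is shorter and self-contained given Proposition \ref{basic facts}.
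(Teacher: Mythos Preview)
Your argument is correct, but there is nothing to compare it against: the paper does not prove this theorem. It is stated as a known result, attributed to Shigekawa \cite{Sh} for closed manifolds and to \cite{HHHO} for the Neumann case, and the paper immediately moves on to use it. Your proof is the standard one---construct an explicit parallel section $e^{i\phi}$ from the integrated potential for $(b)\Rightarrow(a)$, and for $(a)\Rightarrow(b)$ observe that a parallel section has constant modulus, hence never vanishes, so its phase is a local primitive of $A$ whose monodromy forces integral fluxes. The paper's preliminary discussion just before the theorem already sketches the exact case $A=d\phi$, and your argument is precisely the natural globalization of that remark to closed $A$ with integral periods.
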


Thus, the first eigenvalue vanishes if and only if $A$ is a closed form whose flux around every closed curve is an integer; equivalently, if $A$ has non-integral flux around at least one closed loop, then $\lambda_1(\Omega,A)>0$.


\subsection{Proof of the lower bound}  From now on we assume that $\Omega$ is a Riemannian cylinder. Fix a first eigenfunction $u$ associated to $\lambda_1(\Omega, A)$ and fix a level curve
$$
\Sigma_r=\{\psi=r\}, \quad\text{where $r\in [0,1]$.}
$$
As $\psi$ has no critical points, $\Sigma_r$ is isometric to $\sphere 1(\frac{L_r}{2\pi})$, where $L_r$ is the length of $\Sigma_r$.  The restriction of $A$ to $\Sigma_r$ is a closed $1$-form denoted by $\tilde A$; we use the restriction of $u$ to $\Sigma_r$ as a test-function for the first eigenvalue $\lambda_1(\Sigma_r,\tilde A)$ and obtain:
\begin{equation}\label{level}
\lambda_1(\Sigma_r,\tilde A)\int_{\Sigma_r}\abs{u}^2\leq\int_{\Sigma_r}\abs{\nabla^{\tilde A}u}^2.
\end{equation}
By the  estimate on the eigenvalues of a circle done in Section \ref{sectioncircle} below we see :
$$
\lambda_1(\Sigma_r,\tilde A)=\dfrac{4\pi^2}{L_r^2}d(\Phi^{\tilde A},{\bf Z})^2,
$$
where $\Phi^{\tilde A}$ is the flux of $\tilde A$ across $\Sigma_r$. Now note that $\Phi^{\tilde A}=\Phi^{A}$, because $\tilde A$ is the restriction of $A$ to $\Sigma_r$; moreover $L_r\leq L$ 
by the definition of $L$. Therefore:
\begin{equation}\label{llower}
\lambda_1(\Sigma_r,\tilde A)\geq \dfrac{4\pi^2}{L^2}d(\Phi^{ A},{\bf Z})^2
\end{equation}
for all $r$. Let $X$ be a unit vector tangent to $\Sigma_r$. Then:
$$
\begin{aligned}
\nabla^{\tilde A}_{X}u&=\nabla_{X}u-i\tilde A(X)u\\
&=\nabla_{X}u-iA(X)u\\
&=\nabla^A_{X}u.
\end{aligned}
$$
The consequence is that:
\begin{equation}\label{energy}
\abs{\nabla^{\tilde A}u}^2=\abs{\nabla^{\tilde A}_{X}u}^2=\abs{\nabla^{A}_{X}u}^2\leq \abs{\nabla^{A}u}^2.
\end{equation}
\nero {\it Note that equality holds in \eqref{energy} iff $\nabla^A_{N}u=0$ where $N$ is a unit vector normal to the level curve $\Sigma_r$ (we could take $N=\nabla\psi/\abs{\nabla\psi}$).}

\smallskip

For any fixed level curve $\Sigma_r=\{\psi=r\}$ we then have, taking into account \eqref{level}, \eqref{llower} and \eqref{energy}:
$$
\dfrac{4\pi^2}{L^2}d(\Phi^{ A},{\bf Z})^2\int_{\psi=r}\abs{u}^2\leq \int_{\psi=r}\abs{\nabla^Au}^2.
$$
Assume that $B_1\leq\abs{\nabla\psi}\leq B_2$ for positive constants $B_1,B_2$. Then the above inequality implies:
$$
\dfrac{4\pi^2}{L^2}d(\Phi^{ A},{\bf Z})^2\cdot B_1\int_{\psi=r}\dfrac{\abs{u}^2}{\abs{\nabla\psi}}\leq B_2\int_{\psi=r}\dfrac{\abs{\nabla^Au}^2}{\abs{\nabla\psi}}.
$$
We now integrate both sides from $r=0$ to $r=1$ and use the coarea formula. Conclude that
$$
\dfrac{4\pi^2}{L^2}d(\Phi^{ A},{\bf Z})^2\cdot B_1\int_{\Omega}{\abs{u}^2}\leq B_2\int_{\Omega}\abs{\nabla^Au}^2.
$$
As $u$ is a first eigenfunction, one has:
$$
\int_{\Omega}\abs{\nabla^Au}^2=\lambda_1(\Omega,A)\int_{\Omega}\abs{u}^2.
$$
Recalling that $K=\frac{B_2}{B_1}$ we finally obtain the estimate \eqref{cylinder}.  


\subsection{Proof of the equality case}\label{equalitycase}

If the cylinder $\Omega$ is a Riemannian product then it is obvious that we can take $K=1$ and then we have equality by Proposition \ref{cyl} below. Now assume that we do have equality: we have to show that $\Omega$ is a Riemannian product.  Going back to the proof, we must have the following facts.

\medskip

{\bf F1.}  {\it All level curves of $\psi$ have the same length $L$}.

\medskip

{\bf F2.}  {\it $\abs{\nabla\psi}$ must be constant and, by renormalization, we can assume that it is everywhere equal to $1$. }Then, $\psi:\Omega\to [0,a]$ for some $a>0$ and we set
$$
N\doteq\nabla\psi.
$$

{\bf F3.} {\it The eigenfunction $u$ on $\Omega$ restricts to an eigenfunction of the magnetic Laplacian of each level set $\Sigma_r=\{\psi=r\}$, with potential given by the restriction of $A$ to $\Sigma_r$.}

\medskip

{\bf F4.} {\it One has $\nabla^A_Nu=0$ identically on $\Omega$. }

\subsubsection{First step: description of the metric}

\begin{lemme} 
$\Omega$ is isometric to the product $[0,a]\times \sphere 1(\frac{L}{2\pi})$ with metric
\begin{equation}\label{metric}
g=\twomatrix 100{\theta^2(r,t)}, \quad (r,t)\in [0,a]\times [0,L]
\end{equation}
where $\theta(r,t)$ is positive and periodic of period $L$ in the variable $t$. Moreover $\theta(0,t)=1$ for all $t$.
\end{lemme}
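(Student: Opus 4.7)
The plan is to use the gradient flow of $\psi$ to introduce Fermi-type coordinates, in which the metric automatically takes the required diagonal form. Only facts F1 and F2 are needed at this stage (F3 and F4 will be exploited in the subsequent steps to promote this diagonal form to an actual product).

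First, by F2 the vector field $N=\nabla\psi$ is everywhere of unit length and defined on all of $\Omega$. Along any integral curve $\gamma$ of $N$ one has $\tfrac{d}{dr}\psi(\gamma(r))=d\psi(N)=\abs{N}^2=1$, so $\psi(\gamma(r))=\psi(\gamma(0))+r$. In particular every integral curve starting on $\Sigma_1=\{\psi=0\}$ hits $\Sigma_2=\{\psi=a\}$ after time exactly $a$, and stays inside $\Omega$ because $\psi$ remains in $[0,a]$.

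Next I parametrize $\Sigma_1$ by arclength via a map $\sigma:\reals/L\reals\to\Sigma_1$, which is possible since by F1 (applied at $r=0$) the curve $\Sigma_1$ has length $L$. Define
$$
\Psi:[0,a]\times(\reals/L\reals)\to\Omega,\qquad \Psi(r,t)=\text{(time-$r$ flow of $N$ starting at $\sigma(t)$)}.
$$
I claim $\Psi$ is a diffeomorphism. Injectivity: if $\Psi(r_1,t_1)=\Psi(r_2,t_2)$, applying $\psi$ forces $r_1=r_2$; then uniqueness of integral curves forces $\sigma(t_1)=\sigma(t_2)$, hence $t_1\equiv t_2 \pmod L$. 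Surjectivity: any $p\in\Omega$ lies on an integral curve of $N$ which, run backwards, reaches $\Sigma_1$ at a unique point $\sigma(t)$, so $p=\Psi(\psi(p),t)$. Smoothness and local invertibility follow from the fact that $N$ is nowhere vanishing and transverse to each $\Sigma_r$.

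Finally I compute the pull-back metric. By construction $\Psi_\ast\partial_r=N$, so $g(\partial_r,\partial_r)=\abs{N}^2=1$. The vector $\partial_t$ is tangent to the level set $\Sigma_r$, which is orthogonal to $N$, so $g(\partial_r,\partial_t)=0$. Setting $\theta(r,t)\doteq\abs{\partial_t}$ we get the diagonal form \eqref{metric}, periodic in $t$ of period $L$ by construction. At $r=0$ one has $\partial_t|_{r=0}=\sigma'(t)$, which is of unit length since $\sigma$ is an arclength parametrization, giving $\theta(0,t)=1$.

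The only delicate point is the global diffeomorphism claim for $\Psi$: it uses in an essential way that $\psi$ has no critical points (so that $N$ is a well-defined nonvanishing vector field) together with the topology of $\Omega$ being that of a cylinder (so that $\Sigma_1$ is a single closed curve and every integral curve of $N$ connects $\Sigma_1$ to $\Sigma_2$ without looping back). Everything else — the orthogonality and the coefficient $1$ in the $\partial_r$ entry — is forced once $N=\nabla\psi$ has been normalized to unit length by F2.
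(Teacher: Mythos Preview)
Your argument is correct and follows the same overall strategy as the paper --- introduce Fermi-type coordinates from $\Sigma_1$ via the field $N=\nabla\psi$ and read off the diagonal form of the metric. The one noteworthy difference is that the paper first proves $\nabla_N N=0$ (so the integral curves of $N$ are geodesics) and then builds the chart from the normal exponential map $F(r,x)=\exp_x(rN)$; you instead use the time-$r$ flow of $N$ directly. Because the orthogonality $g_{12}=0$ follows simply from $\partial_t$ being tangent to the level set $\{\psi=r\}$ and $N=\nabla\psi$ being orthogonal to it, the geodesic computation is not actually needed for this lemma, and your route is slightly more economical. The paper's extra step does buy the interpretation of $\psi$ as the distance function to $\Sigma_1$, but that plays no role in the subsequent arguments.
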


\begin{proof} We first show that the integral curves of $N$ are geodesics; for this it is enough to show that 
$
\nabla_NN=0
$ 
on $\Omega$. Let $e_1(x)$ be a vector tangent to the level curve of $\psi$ passing through $x$. Then, we obtain a smooth vector field $e_1$ which, together with $N$, forms a global orthonormal frame.  Now 
$$
\scal{\nabla_NN}{N}=\dfrac 12 N\cdot\scal{N}{N}=0.
$$
On the other hand, as the Hessian is a symmetric tensor:
$$
\scal{\nabla_NN}{e_1}=\nabla^2\psi(N,e_1)=\nabla^2\psi(e_1,N)=\scal{\nabla_{e_1}N}{N}=\dfrac 12e_1\cdot\scal{N}{N}=0.
$$
Hence $\nabla_NN=0$ as asserted. As each integral curve of $N=\nabla\psi$ is a geodesic meeting $\Sigma_1$ orthogonally, we see that $\psi$ is actually the distance function to $\Sigma_1$. We introduce coordinates on $\Omega$ as follows. For a fixed point $p\in\Omega$ consider the unique integral curve $\gamma$ of $N$ passing through $p$ and   let $x\in\Sigma_1$ be the intersection 
of $\gamma$ with $\Sigma_1$ (note that $x$ is the foot of the unique geodesic which minimizes the distance from $p$ to $\Sigma_1$). Let $r$ be the distance of $p$ to $\Sigma_1$. We then have a map
$
\Omega\to [0,a]\times\Sigma_1
$
which sends $p$ to $(r,x)$. Its  inverse is the map $F: [0,a]\times\Sigma_1\to\Omega$ defined by
$$
F(r,x)=\exp_x(rN).
$$
Note that $F$ is a diffeomeorphism; we call the pair $(r,x)$ the {\it normal coordinates} based on $\Sigma_1$. We introduce the arc-length $t$ on $\Sigma_1$ (with origin in any assigned point of $\Sigma_1$) and recall  that  $L$ is length of $\Sigma_1$ (which is also the length of $\Sigma_2)$).
Let us compute the metric $g$ in normal coordinates. Since $N=\opd r$ one sees that
$g_{11}=1$ everywhere; for any fixed $r=r_0$ we have that $F(r_0,\cdot)$ maps $\Sigma_1$ diffeomorphically onto the level set $\{\psi=r_0\}$ so that $\opd r$ and $\opd t$ will be mapped onto orthogonal vectors, and indeed $g_{12}=0$. Setting 
$\theta(r,t)^2=\scal{\opd t}{\opd t}$ one sees that the metric takes the form \eqref{metric}.
Finally note that
$
\theta(0,t)=1
$
for all $t$, because $F(0,\cdot)$ is the identity. 
\end{proof}


\subsubsection{Second step : Gauge invariance} 

\begin{lemme} Let $\Omega$ be any Riemannian cylinder and $A=f(r,t)\,dr+h(r,t)\,dt$  a closed $1$-form on $\Omega$. Then, there exists a smooth function $\phi$ on $\Omega$ such that
$$
A+d\phi=H(t)\,dt
$$
for a smooth function $H(t)$ depending only on t. Hence, by gauge invariance, we can assume from the start that $A=H(t)\,dt$.
\end{lemme}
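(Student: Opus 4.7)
The plan is to solve a first-order ODE in the radial variable $r$ to eliminate the $dr$-component of $A$, and then use the closedness of $A$ to show that the remaining $dt$-component is independent of $r$. So I will look for a smooth real function $\phi$ on $\Omega$ satisfying $\partial\phi/\partial r = -f(r,t)$, and then hope that $\partial\phi/\partial t + h$ turns out to depend only on $t$.

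The natural candidate is
\[
\phi(r,t)\doteq -\int_0^r f(s,t)\,ds.
\]
Before anything else, I need to verify that $\phi$ descends to a smooth function on the cylinder, which in the coordinates $(r,t)\in[0,a]\times[0,L]$ of the preceding lemma amounts to periodicity in $t$ (together with the matching of all $t$-derivatives). Since the form $A$ is smooth and globally defined on $\Omega$, the coefficient $f$ is periodic in $t$ of period $L$, and this periodicity passes through the $s$-integration, so $\phi$ and all its derivatives are periodic in $t$. Thus $\phi\in C^\infty(\Omega)$.

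Next I will compute $A+d\phi$ in the coordinates $(r,t)$. By construction the $dr$-component vanishes: $f+\partial\phi/\partial r=0$. For the $dt$-component, differentiation under the integral sign gives
\[
\pd{\phi}{t}(r,t)=-\int_0^r \pd{f}{t}(s,t)\,ds.
\]
Here is where the hypothesis $dA=0$ enters: it is equivalent to $\partial f/\partial t=\partial h/\partial r$, so by the fundamental theorem of calculus
\[
\pd{\phi}{t}(r,t)=-\int_0^r \pd{h}{r}(s,t)\,ds=-h(r,t)+h(0,t),
\]
and therefore $h(r,t)+\partial\phi/\partial t=h(0,t)$, a function of $t$ only. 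Setting $H(t)\doteq h(0,t)$ yields $A+d\phi=H(t)\,dt$, as required.

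There is no genuine obstacle here; the only point that requires care is the global one, namely checking that the explicit primitive $\phi$ is a well-defined smooth function on the cylinder (periodicity in $t$). Once this is in hand, the calculation is purely local and is driven by the identity $\partial_t f=\partial_r h$. I will then invoke Proposition \ref{basic facts}(a) to conclude that, without loss of generality, $A$ itself has the form $H(t)\,dt$.
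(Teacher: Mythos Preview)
Your proof is correct and follows essentially the same route as the paper: both use the primitive $\phi(r,t)=-\int_0^r f(s,t)\,ds$ to kill the $dr$-component and then invoke closedness to see that the remaining $dt$-coefficient is independent of $r$. The only cosmetic difference is that the paper phrases the last step as ``$A+d\phi$ is still closed, hence $\partial_r\tilde h=0$'', whereas you compute $H(t)=h(0,t)$ directly via $\partial_t f=\partial_r h$ and the fundamental theorem of calculus; your added remark on periodicity in $t$ is a welcome bit of care that the paper leaves implicit.
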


\begin{proof} Consider the function
$
\phi(r,t)=-\int_0^rf(x,t)\,dx.
$
Then:
$$
A+d\phi=\tilde h(r,t)\,dt
$$
for some smooth function $\tilde h(r,t)$. 
As $A$ is closed, also $A+d\phi$ is closed, which implies that $\derive{\tilde h}{r}=0$, that is,
$
\tilde h(t,r)
$
does not depend on $r$; if we set $H(t)\doteq\tilde h(t,0)$ we get the assertion.
\end{proof}

\nero We point out the following consequence. If $u=u(r,t)$ is an eigenfunction, we know from  {\bf F4} above that $\nabla^A_Nu=0$, where $N=\derive{}{r}$. As 
$
\nabla^A_Nu=\derive ur-iA(\derive{}{r})u
$
and $A=H(t)\,dt$ we obtain $A(\derive{}{r})=0$ hence
$
\derive ur=0
$
at all points of $\Omega$. This implies that 
\begin{equation}\label{uoft}
u=u(t)
\end{equation}
 depends only on $t$. 


\subsubsection{Third step :  spectrum of  circles and Riemannian products} \label{sectioncircle} In this section, we give an expression for the eigenfunctions of the magnetic Laplacian on a circle with a Riemannian metric $g$ and a closed potential $A$. Of course, we know that any metric $g$ on a  circle is always isometric to the canonical metric $g_{\rm can}=\,dt^2$, where $t$ is arc-length. But our problem in this proof is to reconstruct the global metric of the cylinder and to show that it is a product, and we cannot suppose a priori that the restricted metric of each level set of $\psi$  is the canonical metric. The same is true for the restricted potential: we know that it is Gauge equivalent to a potential of the type $a\,dt$ for a scalar $a$, but we cannot suppose a priori that it is of that form.

We refer to Appendix \ref{riemannian circle} for the complete proof of the following fact.
\begin{prop}\label{circle} Let $(M,g)$ be the circle of length $L$ endowed with the metric
$
g=\theta(t)^2\,dt^2
$
where $t\in [0,L]$ and $\theta(t)$ is a positive function, periodic of period $L$. Let $A=H(t)\,dt$. Then, the eigenvalues of the magnetic Laplacian with potential $A$ are:
$$
\lambda_k(M,A)=\dfrac{4\pi^2}{L^2}(k-\Phi^A)^2, \quad k\in\bf Z
$$
with associated eigenfunctions
$$
u_k(t)=e^{i\phi(t)}e^{\frac{2\pi i (k-\Phi^A)}{L}s(t)}, \quad k\in\bf Z.
$$
where $\phi(t)=\int_0^tH(\tau)\,d\tau$ and $s(t)=\int_0^t\theta(\tau)\,d\tau$. 

\smallskip

In particular, if the metric is the canonical one, that is, $g=dt^2$, and the potential $1$-form is harmonic, so that $A=\frac{2\pi \Phi^A}{L}dt$, then the eigenfunctions are
simply :
$$
u_k(t)=e^{\frac{2\pi i k}{L}t}, \quad k\in\bf Z.
$$
\end{prop}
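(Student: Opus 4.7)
The plan is to reduce the problem to the canonical circle with a constant potential, where Fourier analysis is completely explicit, via two successive moves: an arc-length reparametrization and a single gauge transformation.

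First, I would introduce the arc-length coordinate $s(t)=\int_0^t\theta(\tau)\,d\tau$, so that $s\in[0,L]$ and the metric becomes the canonical one $ds^2$. Under this diffeomorphism, the $1$-form $A=H(t)\,dt$ pulls back to $\tilde A=\tilde H(s)\,ds$ with $\tilde H(s(t))=H(t)/\theta(t)$, and a change of variables shows that the flux is preserved: $\frac{1}{2\pi}\int_0^L\tilde H(s)\,ds=\Phi^A$.

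Second, I would apply gauge invariance (Proposition \ref{basic facts}(a)) to shift $\tilde H$ to its mean value $a\doteq 2\pi\Phi^A/L$. The natural choice is $\eta(s)=\int_0^s\bigl[\tilde H(\sigma)-a\bigr]\,d\sigma$, which gives $\tilde A-d\eta=a\,ds$. The crucial consistency check is that $\eta(L)=0$ by the choice of $a$, so that $\eta$ descends to a smooth real-valued function on the circle and the gauge transformation is legitimate. This reduces the spectral problem to that of $\Delta_{a\,ds}$ on the canonical circle of length $L$.

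Third, I would solve this reduced problem by the plane-wave ansatz $v(s)=e^{i\omega s}$, for which a direct computation gives $\Delta_{a\,ds}v=(\omega-a)^2v$. Single-valuedness forces $\omega=2\pi k/L$ with $k\in\bf Z$, yielding eigenvalues $\lambda_k=\frac{4\pi^2}{L^2}(k-\Phi^A)^2$ and eigenfunctions $v_k(s)=e^{2\pi iks/L}$; since the $v_k$ form a Hilbert basis of $L^2$, they exhaust the spectrum.

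Finally, I would undo the two reductions. The transformation $u\mapsto e^{i\eta}u$ intertwines $\Delta_{\tilde A-d\eta}$ with $\Delta_{\tilde A}$, so the eigenfunctions of the original operator are $e^{i\eta(s(t))}v_k(s(t))$, and the elementary identity $\eta(s(t))=\phi(t)-(2\pi\Phi^A/L)s(t)$ delivers the stated closed-form expression. The main subtlety lies purely in the bookkeeping of the second step: the requirement that $\eta$ be single-valued on the circle is exactly what forces the subtraction of the mean of $\tilde H$, and this is precisely the mechanism through which the flux $\Phi^A$ enters the eigenvalues. The rest of the argument is routine once this is in place.
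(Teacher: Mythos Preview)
Your proposal is correct and follows essentially the same route as the paper: both arguments reduce to the canonical circle with constant potential via an arc-length change of variable together with a gauge transformation, and then solve by Fourier analysis. The only cosmetic difference is the order of the two reductions---the paper first gauges $A$ to the harmonic form $\omega=c\theta\,dt$ (with $c=2\pi\Phi^A/L$) and then changes variable in the resulting ODE, whereas you reparametrize first and gauge afterwards; since $c\theta\,dt=c\,ds$, the two reductions commute and land on the same model problem.
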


We remark that if the flux $\Phi^A$ is not congruent to $1/2$ modulo integers, then the eigenvalues are all simple. If the flux is congruent to $1/2$ modulo integers, then there are two consecutive integers $k,k+1$ such that
$
\lambda_{k}=\lambda_{k+1}.
$
Consequently, the lowest eigenvalue has multiplicity two, and the first eigenspace is spanned by
$$
e^{i\phi(t)}e^{\frac{\pi i}{L}s(t)}, \, e^{i\phi(t)}e^{-\frac{\pi i}{L}s(t)}.
$$
The following proposition is an easy consequence (for a proof, see also Appendix \ref{riemannian circle}).

\begin{prop}\label{cyl} Consider the Riemannian product $\Omega=[0,a]\times\sphere 1(\frac{L}{2\pi})$, and let $A$ be a closed $1-$form on $\Omega$. Then, the spectrum of $\Delta_A$ is given by
$$
\dfrac{\pi^2 h^2}{a^2}+\dfrac{4\pi^2}{L^2}(k-\Phi^A)^2, \quad h, k\in{\bf Z}, h\geq 0.
$$
In particular,
$$
\lambda_1(\Omega,A)=\dfrac{4\pi^2}{L^2}d(\Phi^A,{\bf Z})^2.
$$
\end{prop}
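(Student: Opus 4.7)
The plan is to exploit separation of variables, which is made clean by first choosing a convenient gauge. By Proposition \ref{basic facts}(b), any closed $1$-form on $\Omega=[0,a]\times\sphere 1(L/2\pi)$ is gauge equivalent to its Coulomb representative; since the product metric is flat and $\Omega$ has simple topology, a direct computation (analogous to the Second Step in Section \ref{equalitycase}) shows this harmonic gauge is $A=\frac{2\pi\Phi^A}{L}\,dt$, i.e.\ a constant multiple of $dt$ with $A(\partial_r)=0$. So I may assume $A$ has this form from the start.

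With this choice, if $(r,t)$ are the product coordinates, one checks directly that
$$
\Delta_A u = -\pd{^2u}{r^2} + \Delta_A^{\sphere 1} u,
$$
where $\Delta_A^{\sphere 1}$ is the magnetic Laplacian acting in the $t$-variable on $\sphere 1(L/2\pi)$ with the harmonic potential $\frac{2\pi\Phi^A}{L}\,dt$. Moreover, since $A(\partial_r)=0$, the magnetic Neumann condition $\nabla^A_Nu=0$ reduces at $r=0$ and $r=a$ to the ordinary Neumann condition $\partial u/\partial r=0$. I then separate variables: look for solutions $u(r,t)=f(r)g(t)$ with $f'(0)=f'(a)=0$. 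The radial problem is the standard Neumann eigenvalue problem on $[0,a]$, with eigenvalues $\pi^2h^2/a^2$ and eigenfunctions $\cos(\pi h r/a)$ for $h=0,1,2,\dots$. The angular problem is solved by Proposition \ref{circle}, giving eigenvalues $\frac{4\pi^2}{L^2}(k-\Phi^A)^2$ and eigenfunctions $g_k(t)=e^{2\pi ikt/L}$ for $k\in\bf Z$.

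Consequently, the products $\cos(\pi h r/a)\,e^{2\pi ikt/L}$ are eigenfunctions of $\Delta_A$ with eigenvalue $\frac{\pi^2h^2}{a^2}+\frac{4\pi^2}{L^2}(k-\Phi^A)^2$. Completeness is the one step worth justifying: since $\{\cos(\pi hr/a)\}_{h\geq 0}$ is a Hilbert basis of $L^2([0,a])$ with Neumann boundary conditions and $\{e^{2\pi ikt/L}\}_{k\in\bf Z}$ is a Hilbert basis of $L^2(\sphere 1(L/2\pi))$, their tensor products form a Hilbert basis of $L^2(\Omega,\C)$, and this basis consists of eigenfunctions of the self-adjoint operator $\Delta_A$; hence it exhausts the spectrum.

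For the second assertion, $\lambda_1(\Omega,A)$ is obtained by minimizing $\frac{\pi^2h^2}{a^2}+\frac{4\pi^2}{L^2}(k-\Phi^A)^2$ over $h\geq 0$ and $k\in\bf Z$. The minimum is clearly achieved at $h=0$, and then the remaining minimum over $k\in\bf Z$ is exactly $\frac{4\pi^2}{L^2}d(\Phi^A,\bf Z)^2$. The only real obstacle is to be careful about the gauge step and about verifying the magnetic Neumann boundary condition survives the separation; both are essentially bookkeeping once $A(\partial_r)=0$ is in hand.
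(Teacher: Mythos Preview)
Your proof is correct and follows essentially the same approach as the paper: pass to the Coulomb gauge $A=\frac{2\pi\Phi^A}{L}\,dt$, observe that the magnetic Neumann condition reduces to the ordinary Neumann condition in $r$, separate variables, and invoke Proposition~\ref{circle} for the circle factor together with completeness of the tensor product basis. The paper's own argument is slightly terser (it ``omits further details'' on completeness and the minimization), but the structure and all key steps coincide.
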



\subsubsection{Fourth step : a calculus lemma} In this section, we state a technical lemma which will allow us to conclude. The proof is conceptually simple, but perhaps tricky at some points; then,  we decided to put it in Appendix \ref{technical lemma}.

\begin{lemme} \label{calculus} Let $s:[0,a]\times [0,L]\to \reals$ be a smooth, non-negative function such that
$$
s(0,t)=t,\quad s(r,0)=0, \quad s(r,L)=L \quad\text{and}\quad \derive st(r,t)\doteq\theta(r,t)>0.
$$
Assume that there exist smooth functions $p(r),q(r)$ with $p(r)^2+q(r)^2>0$ such that
$$
p(r)\cos(\frac{\pi}{L}s(r,t))+q(r)\sin(\frac{\pi}{L}s(r,t))=F(t)
$$
where $F(t)$ depends only on $t$. Then $p$ and $q$ are constant and
$
\derive sr=0
$
so that 
$$s(r,t)=t
$$
 for all $(r,t)$.
\end{lemme}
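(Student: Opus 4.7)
My plan is to work through the constraint in four layers: (i) evaluating at $r=0$ determines $F$; (ii) evaluating at $t=0$ forces $p(r)$ to be constant; (iii) comparing the squared amplitudes of both sides as functions of $t$ forces $q(r)$ to be constant; and (iv) the monotonicity $\partial s/\partial t=\theta>0$ rules out the reflection branch of the resulting cosine equation, leaving only $s(r,t)=t$. For (i) and (ii): setting $r=0$ and using $s(0,t)=t$ gives $F(t)=p_0\cos(\pi t/L)+q_0\sin(\pi t/L)$ with $p_0:=p(0)$ and $q_0:=q(0)$; then setting $t=0$ and using $s(r,0)=0$ collapses the hypothesis to $p(r)=F(0)=p_0$, so $p\equiv p_0$.

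For (iii), fix $r\in[0,a]$; since $\theta(r,\cdot)>0$ and $s(r,0)=0,\ s(r,L)=L$, the map $t\mapsto \pi s(r,t)/L$ is a smooth diffeomorphism from $[0,L]$ onto $[0,\pi]$. Using the elementary identity
$$
\bigl(p_0\cos\sigma+q(r)\sin\sigma\bigr)^2=\tfrac{1}{2}\bigl(p_0^2+q(r)^2\bigr)\bigl(1+\cos(2\sigma-2\phi(r))\bigr)
$$
for a suitable phase $\phi(r)$, and noting that as $\sigma$ traverses $[0,\pi]$ the argument $2\sigma-2\phi(r)$ sweeps an interval of length $2\pi$ (so $\cos$ attains $+1$), we get $\max_{t\in[0,L]} F(t)^2=p_0^2+q(r)^2$. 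Applying the same analysis to the right-hand side (i.e., the case $r=0$) gives $\max F^2=p_0^2+q_0^2$, so $q(r)^2=q_0^2$. Continuity of $q$ together with $q(0)=q_0$ then yields $q\equiv q_0$.

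For (iv), with $p,q$ constant and $R:=\sqrt{p_0^2+q_0^2}>0$, the hypothesis becomes $R\cos(\pi s(r,t)/L-\phi)=R\cos(\pi t/L-\phi)$. Equality of cosines forces either $s(r,t)\equiv t\pmod{2L}$ (an increasing branch, giving $s=t$ on $[0,L]$) or $s(r,t)\equiv 2L\phi/\pi-t\pmod{2L}$ (a branch strictly decreasing in $t$). Since $\theta>0$ makes $s(r,\cdot)$ strictly increasing, only the first branch is compatible with the hypothesis; hence $s(r,t)=t$ throughout, and in particular $\partial s/\partial r\equiv 0$.

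The delicate step will be (iii): a naive amplitude argument reading $\max(p_0\cos\sigma+q\sin\sigma)$ directly as $\sqrt{p_0^2+q^2}$ fails when $p_0$ and $q(r)$ have opposite signs, since the extremum on $[0,\pi]$ may sit at an endpoint rather than at the interior critical point. Passing to the square, which turns the range $[0,\pi]$ into a full period $[0,2\pi]$ for the new argument $2\sigma-2\phi(r)$, sidesteps this uniformly. Step (iv) would otherwise demand a bifurcation analysis between the two branches, which the monotonicity $\theta>0$ short-circuits at once.
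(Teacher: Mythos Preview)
Your proof is correct and follows a genuinely different route from the paper's.

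Both arguments open the same way: evaluating at $t=0$ (using $s(r,0)=0$) forces $p(r)=F(0)$ to be constant. From there the paper differentiates the identity in $t$, inverts the resulting $2\times 2$ system to express $p$ and $q$ in terms of $F,F',s,\theta$, then splits into the cases $F'(0)=0$ and $F'(0)\neq 0$; in the latter it locates an interior zero $t_0$ of $F'$, uses it to force $\partial s/\partial r(r,t_0)=0$, and from that deduces $q'(r)=0$. Only then does it differentiate in $r$ to finish.

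Your route is cleaner on the middle step. The amplitude argument---squaring so that $\sigma\mapsto 2\sigma$ turns the half-period $[0,\pi]$ into a full period---yields $\max_t F(t)^2=p_0^2+q(r)^2$ for every $r$ at once, with no case analysis and no appeal to a critical point of $F$. This is a nice idea, and your remark that one must square (because $\max_{\sigma\in[0,\pi]}(p_0\cos\sigma+q\sin\sigma)$ itself can sit at an endpoint when $p_0$ and $q$ have opposite signs) is exactly the right diagnosis.

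One small tightening in step~(iv): the dichotomy ``$s\equiv t$ or $s\equiv 2L\phi/\pi - t$ (mod $2L$)'' is a priori pointwise in $t$, so you should rule out switching between branches. This is immediate: for fixed $r$ the sets where each congruence holds are closed and cover $[0,L]$; if the ``decreasing'' branch held on a nonempty open interval then $\partial s/\partial t=-1$ there, contradicting $\theta>0$; hence $s(r,t)=t$ on a dense set and, by continuity, everywhere. (Alternatively, once $p,q$ are constant you can simply differentiate $R\cos(\pi s/L-\phi)=F(t)$ in $r$: $\sin(\pi s/L-\phi)$ vanishes for at most finitely many $t$ at each fixed $r$, so $\partial s/\partial r\equiv 0$.) The paper closes with essentially this last differentiation-in-$r$ step.
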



\subsubsection{End of proof of the equality case} Assume that equality holds. Then, if $u$ is an eigenfunction, we know that $u=u(t)$ by the discussion in \eqref{uoft} and $u$ restricts to an eigenfunction on each level circle $\Sigma_r$ for the potential  $A=H(t)\,dt$ above (see Fact 3 at the beginning of Section \ref{equalitycase} and the second step above). 

\medskip
We assume that $\Phi^A$ is congruent to $\frac 12$ modulo integers. This is the most difficult case; in the other cases the proof is a particular case of this, it is simpler and we omit it.


\medskip

Recall that each level set $\Sigma_r$ is a circle of length $L$ for all $r$, with metric $g=\theta(r,t)^2\,dt$. As the flux of $A$ is congruent to $\frac 12$ modulo integers, we see that there exist complex-valued functions $w_1(r),w_2(r)$ such that
$$
u(t)=e^{i\phi(t)}\Big(w_1(r)e^{\frac{\pi i}{L}  s(r,t)}+w_2(r)e^{-\frac{\pi i}{L}  s(r,t)}\Big),
$$
which, setting $f(t)=e^{-i\phi(t)}u(t)$,  we can re-write
\begin{equation}\label{rewrite}
f(t)=w_1(r)e^{\frac{\pi i}{L}  s(r,t)}+w_2(r)e^{-\frac{\pi i}{L} s(r,t)}.
\end{equation}
Recall that here $\phi(t)=\int_0^tH(\tau)\,d\tau$ and
$$
s(r,t)=\int_0^t\theta(r,\tau)\,d\tau.
$$
We take the real part on both sides of \eqref{rewrite} and obtain smooth real-valued functions $F(t), p(r),q(r)$ such that 
$$
F(t)=p(r)\cos({\frac{\pi}{L}}s(r,t))+q(r)\sin(\frac{\pi}{L} s(r,t)).
$$
Since $\theta(0,t)=1$ for all $t$, we see
$$
s(0,t)=t.
$$
Clearly $s(r,0)=0$; finally, $s(r,L)=\int_0^L\theta(r,\tau)\,d\tau=L$, being the length of the level circle $\Sigma_r$. Thus, we can apply Lemma \ref{calculus} and conclude that $s(r,t)=t$ for all $t$, that is,
$$
\theta(r,t)=1
$$
for all $(r,t)$ and the metric is a Riemannian product. 

It might happen that $p(r)=q(r)\equiv 0$. But then the real part of $f(t)$ is zero and we can work in an analogous way with the imaginary part of $f(t)$, which cannot vanish unless $u\equiv 0$. 



\subsection{General estimate of $K_{\Omega,\psi}$} \label{estimate K}

We can estimate  $K_{\Omega,\psi}$ for a Riemannian cylinder $\Omega=[0,a]\times\sphere 1$ if we know the explicit expression of the metric in the normal coordinates $(r,t)$, where $t\in [0,2\pi]$ is arc-length :
$$
g=
\left(
    \begin{array}{cc}
	g_{11} &  g_{12}  \\
	 g_{21} & g_{22}
	 \end{array}
		 \right).
$$
If $g^{ij}$ is the inverse matrix of $g_{ij}$, and if $\psi=\psi(r,t)$ one has:
$$
\abs{\nabla\psi}^2=g^{11}\Big(\derive{\psi}{r}\Big)^2+2g^{12}\derive{\psi}r\derive{\psi}{t}+
g^{22}\Big(\derive{\psi}{t}\Big)^2.
$$
The function $\psi(r,t)=r$ belongs to ${\cal F}_{\Omega}$ and one has:
$
\abs{\nabla\psi}^2=g^{11},
$
which immediately implies that we can take
$$
K_{\Omega,\psi}\leq \dfrac{\sup_{\Omega}g^{11}}{\inf_{\Omega}g^{11}}.
$$
Note in particular that if $\Omega$ is rotationally invariant, so that the metric can be put in the form:
$$
g=\left(
    \begin{array}{cc}
	1 &  0 \\
	0 & \alpha(r)^2
	 \end{array}
		 \right),
$$
for some function $\alpha(r)$,  then $K_{\Omega,\psi}=1$. The estimate becomes
\begin{equation}\label{simple}
\lambda_1(\Omega,A)\geq\dfrac{4\pi^2}{L^2}\cdot d(\Phi^A,{\bf Z})^2,
\end{equation}
where $L$ is the maximum length of a level curve $r={\rm const}$.

\begin{ex}
{\rm Yet more generally, one can fix a smooth closed curve $\gamma$ on a Riemannian surface $M$ and consider the tube of radius $R$ around $\gamma$:
$$
\Omega=\{x\in M: d(x,\gamma)\leq R\}.
$$
It is well-known that if $R$ is sufficiently small (less than the injectivity radius of the normal exponential map) then $\Omega$ is a cylinder with smooth boundary which can be foliated by the level sets of $\psi$, the distance  function to $\gamma$.
Clearly $\abs{\nabla\psi}=1$ and \eqref{simple} holds as well. 

\smallskip

A concrete example where we  could estimate the width $R$ is the case of a compact surface $M$ of genus $\ge 2$ and curvature $-a^2\le K\le -b^2$, $a \ge b >0$. Let $\gamma$ be a simple closed geodesic. Then, using the Gauss-Bonnet theorem, one can show that $R$ is bounded below by an explicit  positive constant $R=R(\gamma,a)$, hence the $R$-neighborhood of $\gamma$  is diffeomorphic to the product $S^1 \times (-1,1)$ (see for example \cite{CF}). If we take $\Omega$ as the Riemannian cylinder of width $R(\gamma,a)$ having one boundary component equal to $\gamma$  then we can foliate $\Omega$  with the  level sets of the distance function to $\gamma$ and so $K=1$ and \eqref{simple} holds,  with $L$ given by the length of the other boundary component.}
 
\end{ex}


\section{Proof of Theorem \ref{main2}: plane annuli} \label{convex}

Let $\Omega$ be an annulus in $\real 2$, which is starlike with respect to its inner convex boundary component $\Sigma_1$.   Assume that $A$ is a closed potential having flux $\Phi^A$ around $\Sigma_1$. Recall that we have to show:
\begin{equation}\label{annuliestimate}
\lambda_1(\Omega,A)\geq \dfrac{4\pi^2}{L^2} \dfrac{\beta m}{B} d(\Phi^A,{\bf Z})^2
\end{equation}
where $\beta, B$ and $m$ will be recalled  below and $L$ is the length of the outer boundary component.  If we assume that $\Sigma_2$ is also convex, then we show that 
$m\geq \beta/B$ and the lower bound takes the form:
\begin{equation}\label{annuliestimatetwo}
\lambda_1(\Omega,A)\geq \dfrac{4\pi^2}{L^2} \dfrac{\beta^2}{B^2} d(\Phi^A,{\bf Z})^2.
\end{equation}

Before giving the proof let us recall notation. For $x\in\Sigma_1$, the ray $\gamma_x$ is the geodesic segment $\gamma_x(t)=x+tN_x$, where $N_x$ is the exterior normal to $\Sigma_1$ at $x$ and $t\geq 0$.  
The  ray $\gamma_x$ meets $\Sigma_2$ at a first point $Q(x)$, and we let
$
r(x)=d(x,Q(x)).
$
For $x\in\Sigma_1$, we denote by $\theta_x$ the angle between the ray $\gamma'_x$ and the outer normal  to $\Sigma_2$ at the point $Q(x)$, and we let
$$
m\doteq\min_{x\in\Sigma_1}{\cos\theta_x}.
$$
We assume that $\Omega$ is strictly starlike, that is, $m>0$; in particular $Q(x)$ is unique. Recall also that:
\begin{equation}\label{annulus}
\beta=\min_{x\in\Sigma_1}r(x), \quad B=\max_{x\in\Sigma_1}r(x).
\end{equation}
We construct a suitable smooth function $\psi$ and estimate the constant $K=K_{\Omega,\psi}$ with respect to the geometry of $\Omega$.
The starlike assumption implies that each point in $\Omega$ belongs to a unique ray $\gamma_x$. Then we can define a function $\psi:\Omega\to [0,1]$ as follows:
$$
\psi=\threesystem
{0\quad\text{on}\quad\Sigma_1}
{1\quad\text{on}\quad\Sigma_2}
{\text{linear on each ray from $\Sigma_1$ to $\Sigma_2$}.}
$$
Estimates \eqref{annuliestimate} and \eqref{annuliestimatetwo} now follow from Theorem \ref{main3} together with the following Proposition. 
\begin{prop} \label{estimate doubly convex} 
\parte a At all points of $\Omega$ one has:
$
\frac{1}{B}\leq\abs{\nabla\psi}\leq\frac{1}{\beta m}.
$
Therefore:
$$
K_{\Omega,\psi}=\dfrac{\sup_{\Omega}\abs{\nabla\psi}}{\inf_{\Omega}\abs{\nabla\psi}}\leq\dfrac{B}{\beta m}.
$$
\parte b One has 
$$
\sup_{r\in [0,1]}\abs{\psi^{-1}(r)}=L=\abs{\Sigma_2}.
$$
\parte c If $\Sigma_2$ is also convex, then $m\geq \beta/B$ hence we can take $K=\beta^2/B^2$. 
\end{prop}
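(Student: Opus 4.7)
The plan is to introduce ``normal'' coordinates on $\Omega$ adapted to the rays from $\Sigma_1$ and extract all three statements from a single explicit metric computation. Let $x(s)$, $s\in[0,L_1]$, be an arc-length parametrization of $\Sigma_1$, let $N(s)$ be its outer unit normal and $\kappa(s)\ge 0$ its curvature (non-negative by convexity); then set
\[
\Psi(s,\tau)=x(s)+\tau r(s)N(s),\qquad (s,\tau)\in[0,L_1]\times[0,1].
\]
The strictly-starlike hypothesis makes $\Psi$ a diffeomorphism onto $\Omega$ with $\psi\circ\Psi(s,\tau)=\tau$. The Frenet formulas $T'=-\kappa N$, $N'=\kappa T$ yield
\[
\partial_s\Psi=(1+\tau r\kappa)T+\tau r'N,\qquad \partial_\tau\Psi=rN,
\]
from which one reads off $\det(g_{ij})=r^2(1+\tau r\kappa)^2$ and, because $\psi=\tau$ in these coordinates,
\[
|\nabla\psi|^2=g^{\tau\tau}=\frac{1}{r^2}+\frac{\tau^2(r')^2}{r^2(1+\tau r\kappa)^2}.
\]

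For (a), differentiating this expression in $\tau$ and using $\kappa\ge 0$ gives a manifestly non-negative answer, so $|\nabla\psi|$ is non-decreasing along each ray. At $\tau=0$ we have $|\nabla\psi|=1/r(s)\ge 1/B$, and at $\tau=1$ the formula $\cos\theta_x=(1+r\kappa)/\sqrt{(1+r\kappa)^2+(r')^2}$---obtained from $\partial_s\Psi|_{\tau=1}$ and the definition of $\theta_x$---gives $|\nabla\psi|=1/(r(s)\cos\theta_x)\le 1/(\beta m)$. For (b), the level-curve length is
\[
L(\tau)=\int_0^{L_1}\sqrt{(1+\tau r\kappa)^2+\tau^2(r')^2}\,ds,
\]
and the $\tau$-derivative of the integrand equals $2(1+\tau r\kappa)r\kappa+2\tau(r')^2\ge 0$, so $L(\tau)\le L(1)=L$.

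For (c) the argument is geometric rather than computational. Fix $x\in\Sigma_1$ and let $\ell_x$ be the tangent line to $\Sigma_2$ at $Q(x)$. Convexity of $\Sigma_2$ turns $\ell_x$ into a support line, so both $\Sigma_2$ and the enclosed $\Sigma_1$ lie in the closed half-plane $\overline{\ell_x^-}$ opposite to the outer normal $\hat\nu_x$ of $\Sigma_2$ at $Q(x)$. A direct projection computation gives $d(x,\ell_x)=r(x)\cos\theta_x$. Convexity of $\Sigma_1$ provides a point $y^{\star}\in\Sigma_1$ whose outer normal equals $\hat\nu_x$; this $y^{\star}$ is the $\Sigma_1$-point farthest in direction $\hat\nu_x$, hence the closest to $\ell_x$, so $d(y^{\star},\ell_x)\le d(x,\ell_x)$. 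The ray from $y^{\star}$ in direction $\hat\nu_x$ must reach $\Sigma_2\subset\overline{\ell_x^-}$ before crossing $\ell_x$, forcing $r(y^{\star})\le d(y^{\star},\ell_x)$. Chaining,
\[
r(x)\cos\theta_x=d(x,\ell_x)\ge d(y^{\star},\ell_x)\ge r(y^{\star})\ge \beta,
\]
so $\cos\theta_x\ge \beta/r(x)\ge \beta/B$, and taking the minimum over $x$ gives $m\ge \beta/B$.

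The main obstacle is the geometric observation at the heart of (c): the right point of $\Sigma_1$ to compare against is not $x$ itself but the ``antipodal'' $y^{\star}$ where $\Sigma_1$ has outer normal $\hat\nu_x$, and both convexity hypotheses enter in an essential but different way (convexity of $\Sigma_2$ to produce the support line, convexity of $\Sigma_1$ to produce $y^{\star}$ with the prescribed normal). Parts (a) and (b) reduce to bookkeeping once the Frenet frame on $\Sigma_1$ is set up; the only real trap there is keeping the orientation convention on $N$ and $\hat\nu_x$ consistent so that the computed $\cos\theta_x$ matches its geometric definition.
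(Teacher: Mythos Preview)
Your proof is correct and, for parts (a) and (b), essentially identical to the paper's: you use the same normal-coordinate parametrization based on $\Sigma_1$ (up to the trivial rescaling $\tau=t/\rho(s)$), and your monotonicity of $|\nabla\psi|$ along rays is equivalent to the paper's monotonicity of $\cos\theta_x(t)$, since $|\nabla\psi|=|\nabla^R\psi|/\cos\theta_x(t)$ with $|\nabla^R\psi|$ constant along each ray.

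For part (c) your argument is valid but more elaborate than necessary. The paper dispenses with the auxiliary point $y^{\star}$ entirely: once $\ell_x$ is a support line of $\Sigma_2$, the foot of the perpendicular $H(x)$ from $x$ to $\ell_x$ lies outside (or on) $\Sigma_2$, so the segment $xH(x)$ must cross $\Sigma_2$ and hence $r(x)\cos\theta_x=d(x,H(x))\ge d(x,\Sigma_2)\ge\beta$. Only the convexity of $\Sigma_2$ is used here; the convexity of $\Sigma_1$ enters implicitly through $\beta=d(\Sigma_1,\Sigma_2)$. Your detour through $y^{\star}$ recovers the same inequality but adds an extra step.
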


The proof of the Proposition \ref{estimate doubly convex} depends on the following steps. 

\medskip

 {\bf Step 1.} {\it On the ray $\gamma_x$ joining $x$ to $Q(x)$, consider the point $Q_t(x)$ at distance $t$ from $x$, and let $\theta_x(t)$  be the angle between $\gamma'_x$ and $\nabla\psi(Q_t(x))$. Then the function
$$
h(t)=\cos(\theta_x(t))
$$
is non-increasing in $t$. As $\theta_x(r(x))=\theta_x$ we have in particular:
$$
\cos(\theta_x(t))\geq \cos(\theta_x)\geq m
$$
for all $t\in [0,r(x)]$ and $x\in\Sigma_1$.}

\medskip

{\bf Step 2.} {\it The function $r\to\abs{\psi^{-1}(r)}$ is non-decreasing in $r$.}

\medskip

{\bf Step 3.} {\it If $\Sigma_2$ is also convex we have $m\geq \beta/B$.}

\medskip

We will prove Steps 1-3  below. 

\medskip

{\bf Proof of Proposition \ref{estimate doubly convex}}.  a) At any point of $\Omega$, let $\nabla^R\psi$ denote the radial part of $\nabla\psi$, which is the gradient of the restriction of $\psi$ to the ray passing through the given point. As such restriction is a linear function, one sees that
$$
\dfrac{1}{B}\leq\abs{\nabla^R\psi}\leq \dfrac{1}{\beta}.
$$
Since $\abs{\nabla\psi}\geq\abs{\nabla^R\psi}$ one gets immediately
$$
\abs{\nabla\psi}\geq\dfrac{1}{B}.
$$
Note that  $\theta_x(t)$, as defined above, is precisely the angle between $\nabla\psi$ and $\nabla^R\psi$, so that, using Step 1,

$$
\abs{\nabla^R\psi}=\abs{\nabla\psi}\cos\theta_x(t)\geq m\abs{\nabla\psi}
$$
hence:
$$
\abs{\nabla\psi}\leq \dfrac{1}{m}\abs{\nabla^R\psi}\leq\dfrac{1}{\beta m}.
$$
as asserted. It is clear that b) and c)  are  immediate consequences of Steps 2-3. 

\medskip

\textbf{Proof of Step 1.} We use a suitable parametrization of $\Omega$. Let $l$ be the length of $\Sigma_1$ and  consider a parametrization
$\gamma:[0,l]\to \Sigma_1$ by arc-length $s$ with origin at a given point in $\Sigma_1$. Let $N(s)$ be the outer normal vector to $\Sigma_1$ at the point $\gamma(s)$.  Consider the set:
$$
\tilde\Omega=\{(t,s)\in [0,\infty)\times [0,l): t\leq \rho(s)\}
$$
where we have set $\rho(s)=r(\gamma(s))$. 
The starlike property implies that the map 
$
\Phi:\tilde\Omega\to \Omega
$
defined by
$$
\Phi(t,s)=\gamma(s)+tN(s)
$$
is a diffeomorphism.
Let us compute the Euclidean metric tensor in the coordinates $(t,s)$. Write $\gamma'(s)=T(s)$ for the unit tangent vector to $\gamma$ and observe that  $N'(s)=k(s)T(s)$, where $k(s)$ is the curvature of $\Sigma_1$ which is everywhere non-negative because $\Sigma_1$ is convex. Then:
$$
\twosystem
{d\Phi(\dfrac{\bd}{\bd t})=N(s)}
{d\Phi(\dfrac{\bd}{\bd s})=(1+tk(s))T(s)}
$$
If we set $\Theta(t,s)=1+t k(s)$ the metric tensor is:
$$
g=\twomatrix{1}{0}{0}{\Theta^2}
$$
and an orthonormal basis is then $(e_1,e_2)$, where
$$
e_1=\dfrac{\bd}{\bd t}, \quad e_2=\dfrac{1}{\Theta}\dfrac{\bd}{\bd s}.
$$
In these coordinates, our function $\psi$ is written:
$$
\psi(t,s)=\dfrac{t}{\rho(s)}.
$$
Now
$$
\twosystem
{\scal{\nabla\psi}{e_1}=\derive{\psi}{t}=\dfrac{1}{\rho(s)}}
{\scal{\nabla\psi}{e_2}=\dfrac{1}{\Theta}\derive{\psi}{s}=-\dfrac{t\rho'(s)}{\Theta(t,s)\rho(s)^2}}.
$$
It follows that
$$
\abs{\nabla\psi}^2=\dfrac{1}{\rho^2}+\dfrac{t^2\rho'^2}{\Theta^2\rho^4}=
\dfrac{\Theta^2\rho^2+t^2\rho'^2}{\Theta^2\rho^4}.
$$
Recall the radial gradient, which is the orthogonal projection of $\nabla\psi$ on the ray, whose direction is given by $e_1$. If we fix $x\in\Sigma_1$, we have
$$
\theta_x(t)=\text{angle between $\nabla\psi$ and $e_1$}
$$
and we have to study the function
$$
h(t)=\cos\theta_x(t)=\dfrac{\scal{\nabla\psi}{e_1}}{\abs{\nabla\psi}}=\dfrac{1}{\rho(s)\abs{\nabla\psi}}
$$
for a fixed $s$.  From the above expression of $\abs{\nabla\psi}$ and a suitable manipulation we see
$$
h(t)^2=\dfrac{\Theta^2}{\Theta^2+t^2g^2}
$$
where $g=\rho'(s)/\rho(s)$. Now
$$
\begin{aligned}
\dfrac{d}{dt}\dfrac{\Theta^2}{\Theta^2+t^2g^2}&=\dfrac{2t\Theta g^2}{(\Theta^2+t^2g^2)^2}
(t\derive{\Theta}{t}-\Theta)\\
\end{aligned}
$$
As $\Theta(t,s)=1+tk(s)$ one sees that $t\derive{\Theta}{t}-\Theta=-1$ hence
$$
\dfrac{d}{dt}h(t)^2=-\dfrac{2t\Theta g^2}{(\Theta^2+t^2g^2)^2}\leq 0
$$
Hence $h(t)^2$ is non-increasing and, as $h(t)$ is positive, it is itself non-increasing. \qed

\medskip

{\bf Proof of Step 2.} In the coordinates $(t,s)$ the curve $\psi^{-1}(r)$ is parametrized by $\alpha:[0,l]\to\tilde\Omega$ as follows:
$$
\alpha(u)=(r\rho(u),u)\quad u\in [0,l].
$$
Then:
$$
\begin{aligned}
\abs{\psi^{-1}(r)}&=\int_0^l\sqrt{g(\alpha'(u),\alpha'(u))}\,du\\
&=\int_0^l\sqrt{r^2\rho'(u)^2+(1+rk(u)\rho(u))^2}\,du
\end{aligned}
$$
Convexity of $\Sigma_1$  implies that $k(u)\geq 0$ for all $u$; differentiating under the integral sign with respect to $r$  one sees that indeed $\frac{d}{dr}\abs{\psi^{-1}(r)}\geq 0$ for all $r\in [0,1]$. 

\medskip

{\bf Proof of Step 3.} Let  $T_x$ be  the tangent line to $\Sigma_2$ at $Q(x)$ and $H(x)$  the point of $T_x$ closest to $x$. As $\Sigma_2$ is convex, $H(x)$  is not an interior point of $\Omega$, hence
$$
d(x,H(x))\geq\beta.
$$
The triangle formed by $x, Q(x)$ and $H(x)$ is rectangle in $H(x)$, then we have:
$$
r(x)\cos\theta_x=d(x,H(x)).
$$
As $r(x)\leq B$ we conclude:
$$
B \cos\theta_x\geq \beta,
$$
which gives the assertion. \quad


\section{Sharpness of the lower bound} \label{sharpness}
 
\subsection{An upper bound}  In this short paragraph, we give a simple way to get an upper bound when the potential $A$ is \emph{closed}. Then, we will use this in different kinds of examples, in order to show that the assumptions of Theorem \ref{main2} are sharp. The geometric idea is the following: if we have a region $D \subset \Omega$ such that the first absolute cohomology group $H^1(D)$ is $0$, then we can estimate from above the spectrum of $\Delta_A$ in $\Omega$ in terms of the spectrum of the usual Laplacian on $D$.
The reason is that the potential $A$ is $0$ on $D$ up to a gauge transformation; then,  on $D$, $\Delta_A$ becomes the usual Laplacian  and any eigenfunction of the Laplacian on $D$ may be extended by $0$ on $\Omega$ and thus used as a test function for the magnetic Laplacian on the whole of $\Omega$.
 
\smallskip
 
Let us give the details.  Let $D$ be a closed subset of $\Omega$ such that, for some (small) $\delta>0$ one has $H^1(D^{\delta},\reals)=0$, where  $D^{\delta}=\{p\in \Omega: {\rm dist}(p,D) < \delta\}$. This happens when $D^{\delta}$ has a retraction onto $D$. We write
$$
\partial D= (\partial D\cap \partial \Omega) \cup (\partial D \cap \Omega)=\partial^{\rm ext}D\cup\partial^{\rm int}D
$$
and we denote by $(\nu_j(D))_{j=1}^{\infty}$ the spectrum of the Laplacian acting on functions, with the Neumann boundary condition on $\partial^{\rm ext}D$ (if non empty) and the Dirichlet boundary condition on $\partial^{\rm int}D$.

\begin{prop}  \label{upperharmonic} Let $\Omega$ be a compact manifold with smooth boundary and $A$  a closed potential on $\Omega$. Assume that  $D\subset \Omega$ is a compact subdomain such that $H^1(D,\textbf R)=H^1(D^{\delta},\textbf R)=0$ for some $\delta>0$. Then we have
$$
\lambda_k(\Omega,A) \le \nu_k(D)
$$
for each $k\geq 1$.
\end{prop}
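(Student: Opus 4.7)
The plan is to exploit the topological hypothesis to gauge $A$ away in a neighborhood of $D$, and then use mixed Neumann--Dirichlet eigenfunctions of the ordinary Laplacian on $D$ as test functions in the min-max formula for $\lambda_k(\Omega,A)$.

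Since $H^1(D^\delta,\reals)=0$, the closed $1$-form $A$ is exact on the open set $D^\delta$, so $A=d\phi$ for some $\phi\in C^\infty(D^\delta,\reals)$. Let $f_1,\dots,f_k$ be an $L^2(D)$-orthonormal family of eigenfunctions for the mixed problem on $D$ (Neumann on $\partial^{\rm ext}D$, Dirichlet on $\partial^{\rm int}D$), associated with the eigenvalues $\nu_1(D)\le\cdots\le\nu_k(D)$. Define test functions on $\Omega$ by
$$
u_j=\begin{cases} e^{i\phi}f_j & \text{on } D,\\ 0 & \text{on }\Omega\setminus D,\end{cases}
$$
and set $V_k=\mathrm{span}(u_1,\dots,u_k)\subset H^1(\Omega,\bf C)$. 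Because $f_j$ vanishes on the internal boundary $\partial^{\rm int}D$, the extension by zero indeed lies in $H^1$; the Neumann condition of $f_j$ on $\partial^{\rm ext}D=\partial D\cap\partial\Omega$ is compatible with the magnetic Neumann condition that defines the form domain of $\Delta_A$.

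The core computation is the gauge identity: on $D^\delta$, where $A=d\phi$,
$$
\nabla^A_X(e^{i\phi}f)=\nabla_X(e^{i\phi}f)-iA(X)e^{i\phi}f=e^{i\phi}\nabla_X f,
$$
since the two terms proportional to $id\phi(X)$ cancel. Applied to any complex linear combination $u=\sum c_j u_j=e^{i\phi}f$, with $f=\sum c_j f_j$, this yields $\abs{\nabla^A u}^2=\abs{\nabla f}^2$ and $\abs{u}^2=\abs{f}^2$ pointwise on $D$, and both sides vanish outside $D$. Hence
$$
\frac{Q_A(u)}{\norm{u}^2}=\frac{\int_D\abs{\nabla f}^2}{\int_D\abs{f}^2}\le\nu_k(D),
$$
the last inequality being the min-max principle for the mixed problem on $D$ applied to the $k$-dimensional test space $\mathrm{span}(f_1,\dots,f_k)$. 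The min-max characterization of $\lambda_k(\Omega,A)$ applied to the $k$-dimensional subspace $V_k$ then gives $\lambda_k(\Omega,A)\le\nu_k(D)$.

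I do not expect any serious obstacle beyond two bookkeeping points, both of which the hypotheses are designed precisely to handle. First, one must verify that each $u_j$ truly lies in the form domain of $\Delta_A$: the Dirichlet condition on $\partial^{\rm int}D$ is essential for the extension by zero to preserve $H^1$ regularity across $\partial^{\rm int}D$. Second, the gauge computation above requires a \emph{smooth} primitive $\phi$ defined on an open set containing $D$, so that $e^{i\phi}f_j$ is smooth on $\overline D$; this is exactly why the hypothesis is $H^1(D^\delta,\reals)=0$ rather than merely $H^1(D,\reals)=0$.
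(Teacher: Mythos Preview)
Your proof is correct and follows essentially the same strategy as the paper: use the vanishing of $H^1(D^\delta,\reals)$ to gauge $A$ away on $D$, then plug mixed Neumann--Dirichlet eigenfunctions of the ordinary Laplacian (extended by zero) into the min-max for $\Delta_A$. The only cosmetic difference is that the paper extends the primitive $\tilde\phi$ to a global smooth function on $\Omega$ via a partition of unity subordinate to $(D^\delta,\Omega\setminus D)$, so that $\tilde A=A+d\phi$ is a genuine potential on all of $\Omega$ vanishing on $D$, and then invokes the global spectral identity $\lambda_k(\Omega,A)=\lambda_k(\Omega,\tilde A)$ before testing with the real-valued $f_j$; you instead keep $A$ and multiply the test functions by $e^{i\phi}$ on $D$ only, which is the same gauge move done locally.
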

 
\noindent
\textbf{Proof.} We recall that for any function $\phi$ on $\Omega$, the operator $\Delta_A$ and $\Delta_{A+d\phi}$ are unitarily equivalent and have the same spectrum. As $A$ is closed and, by assumption,  $H^1(D^{\delta},\textbf R)=0$, $A$ is exact on $D^{\delta}$ and there exists a function $\tilde \phi$ on $D^{\delta}$ such that $A+d\tilde\phi=0$ on $D^{\delta}$.
 
\smallskip
We consider the restriction of $\tilde\phi$ to $D$ and extend it differentiably on $\Omega$ by using a partition of unity $(\chi_1,\chi_2)$ subordinated to $(D^{\delta},\Omega/D)$. Then, setting
$$
\phi\doteq\chi_1 \tilde\phi
$$
we see that $\phi$ is a smooth function on $\Omega$ which is equal to $\tilde\phi$ on $D$ so that, on $D$, one has $A+d\phi=0$.
We consider the new potential $\tilde A=A+d\phi$ and observe that $\tilde A=0$ on $D$.
 
\smallskip
 
Now consider an eigenfunction $f$ for the mixed problem on $D$ (Neumann boundary conditions on $\partial^{\rm ext}D$ and  Dirichlet boundary conditions on $\partial^{\rm int}D$), and  extend it by $0$ on $\Omega\setminus D$. As $\tilde A=0$ on $D$, we see that
$$
\abs{\nabla^{\tilde A}f}^2=\abs{\nabla f}^2,
$$
and we get a test function  having the same Rayleigh quotient as that of $f$. Thanks to the usual min-max characterization of the spectrum, we obtain, for all $k$:
$$
\lambda_k(\Omega,A) = \lambda_k(\Omega,\tilde A)\le \nu_k(D).
$$

 
 \subsection{Sharpness}
We will use Proposition \ref{upperharmonic} to show the sharpness of the hypothesis in Theorem \ref{main2}. Let us first show that we need to control the ratio $\frac{BL}{\beta}$.
 
\begin{ex} \label{Blarge} \rm In the first situation, we give an example where the ratio $\frac{BL}{\beta} \to \infty$ and  the distance $\beta$ between the two components of the boundary is uniformly bounded from below. We want to show that $\lambda_1\to 0$. We consider an annulus $\Omega$ composed of two concentric balls of radius $1$ and $R+1$ and same center, with $R\to \infty$. We have $B=\beta=R$ and $L \to \infty$.

From the assumptions we get the existence of a point $x\in \Omega$ such that the ball $B(x,\frac{R}{2})$ of center $x$ and radius $\frac{R}{2}$ is contained in $\Omega$. Proposition \ref{upperharmonic} implies that $\lambda_1(\Omega,A)$ is bounded from above by the first eigenvalue of the Dirichlet problem for the Laplacian of the ball, which is proportional to $\frac{1}{R^2}$ and tends to zero because $R\to\infty$. 
 \end{ex}

\begin{ex} \label{example1} \rm Next, we  construct an example
to show that if the distance $\beta$ tends to $0$ and $B$ and $L$ are uniformly bounded from below and from above, then again $\lambda_1 \to 0$. We again use Proposition \ref{upperharmonic}.
Fix  the rectangles :
$$
R_2 = [-4,4]\times [0,4], \quad R_{1,\epsilon}=[-3,3]\times [\epsilon,2]
$$
and consider the region $\Omega_{\epsilon}$ given by the closure of $R_2\setminus R_{1,\epsilon}$. Note that $\Omega_{\epsilon}$ is a planar annulus whose boundary components are convex and get closer and closer as $\epsilon\to 0$.
 
\begin{figure}[h]\centering
\includegraphics[width=70mm]{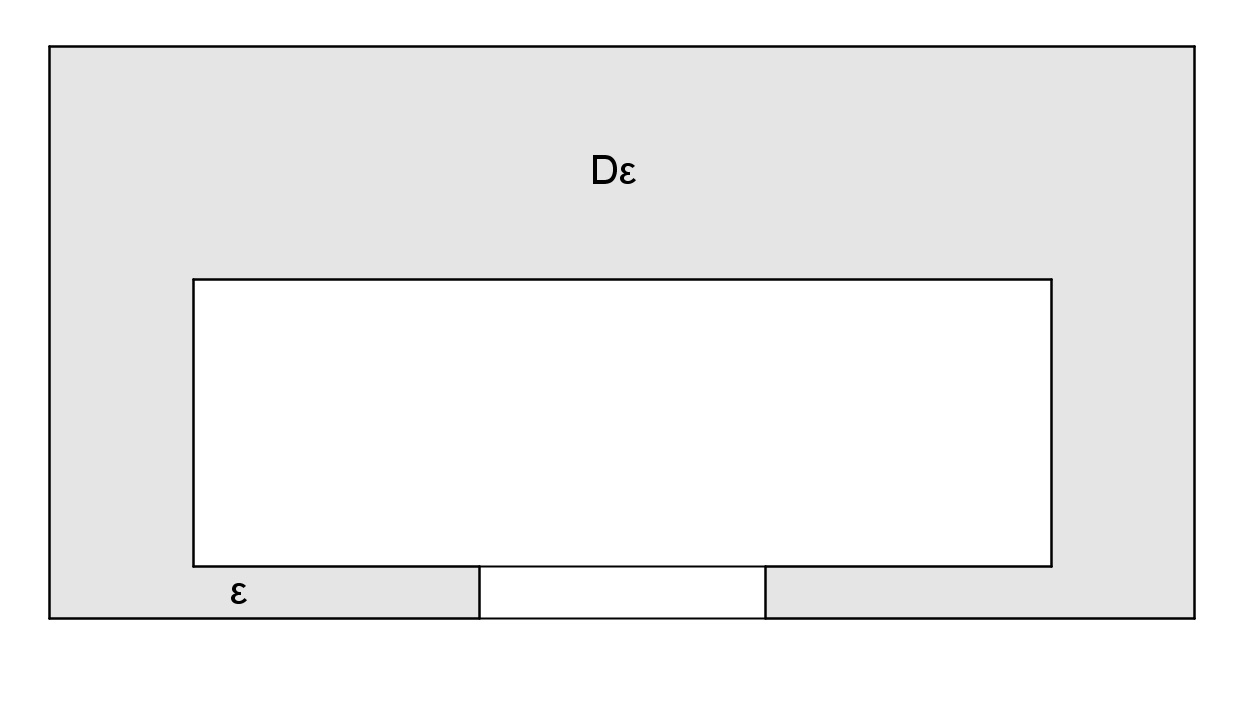}
\caption{$\lambda_1 \to 0$ as $\epsilon \to 0$}
\end{figure}
 
We show that, for any closed potential $A$ one has:
\begin{equation}\label{small}
\lim_{\epsilon\to 0}\lambda_1(\Omega_{\epsilon},A)=0.
\end{equation}
Consider the simply connected region $D_{\epsilon}\subset\Omega_{\epsilon}$ given by the complement of the rectangle $[-1,1]\times [0,\epsilon]$. Now $D_{\epsilon}$ has trivial $1$-cohomology; by Proposition \ref{upperharmonic}, to show \eqref{small} it is enough to show that
\begin{equation}\label{enough}
\lim_{\epsilon\to 0}\nu_1(D_{\epsilon})=0.
\end{equation}
By the min-max principle :
$$
\nu_1(D_{\epsilon})=\inf\Big\{ \frac{\int_{D_{\epsilon}}\vert \nabla f\vert^2}{\int_{D_{\epsilon}}f^2} : f=0\,\,\text{on}\,\, \bd D_{\epsilon}^{\rm int} \Big\}
$$
where
$$
\bd D_{\epsilon}^{\rm int} =\{(x,y)\in\Omega_{\epsilon}:x=\pm 1, y\in [0,\epsilon]\}.
$$
Define the test-function $f:D_{\epsilon}\to\reals$ as follows.
$$
f=\threesystem{1\quad\text{on the complement of $[-2,2]\times[0,\epsilon]$}}
{x-1\quad\text{on $[1,2]\times [0,\epsilon]$}}
{-x-1\quad\text{on $[-2,-1]\times [0,\epsilon]$}}
$$
One checks easily that, for all $\epsilon$:
$$
\int_{D_{\epsilon}}\vert \nabla f\vert^2=2\epsilon, \quad \int_{D_{\epsilon}}f^2\geq {\rm const}>0
$$
Then \eqref{enough} follows immediately by observing that the Rayleigh quotient of $f$ tends to $0$ as $\epsilon\to 0$
\end{ex}

\begin{ex} \label{example small} \rm In the example we constructed previously the two boundary components approach each other along a common set of positive measure (precisely, a segment of total length $6$). In the next example we sketch a construction showing that, in fact, this is not necessary. 

\smallskip

So, let us fix the outside curve $\Sigma_2$ and choose a family of inner convex curves $\Sigma_1$ such that $B$ is bounded below (say, $B\ge 1$) and $\beta \to 0$ (no other assumption is made).  Then, we want to show that $\lambda_1(\Omega,A)\to 0$. 

\smallskip

Fix points $x\in \Sigma_2$, $y\in \Sigma_1$ such that $d(x,y)=\beta$. We take $b=2\beta$ and  introduce the balls of center $x$ and radius $b$ and $\sqrt b$,  denoted by $B(x,b)$ and $B(x,\sqrt b)$, respectively. Then the set $D=\Omega\setminus (B(x,b)\cap\Omega)$ is simply connected so that,  by Proposition \ref{upperharmonic}:
$$
\lambda_1(\Omega,A)\leq \nu_1(D)
$$
and it remains to show that $\nu_1(D)\to 0$ as $b\to 0$.
 
\medskip

Introduce the function  $F(r)$ ( $r$ being the distance to $x$):
 $$
F(r)=\threesystem{1\quad\text{on the complement of $B(x,\sqrt b)$}}
{0\quad\text{on $B(x,b)$}}
{\frac{-2}{\ln b} (\ln r -\ln b)\quad\text{on $B(x,\sqrt b)-B(x,b)$}}
$$
and let $f$ be the restriction of $F$ to $D$. As $f=0$  on $\bd^{\rm int}D=\bd B(x,b)\cap\Omega$, we see that $f$ is a test function for the eigenvalue $\nu_1(D)$. A straightforward calculation shows that,  as $b\to 0$, we have
$$
\int_D \vert \nabla f\vert^2 \to 0;
$$
on the other hand, as $B \ge 1$, the volume of $D$ is uniformly bounded from below, which implies that
$$
\int_D f^2 \ge C >0.
$$
We conclude that the Rayleigh quotient of $f$ tends to $0$ as $b \to 0$, which shows the assertion.  
\end{ex}
 
\begin{ex} \label{example2} \rm The following example  shows that we need to impose some condition on the outer curve in order to get a positive lower bound as in Theorem \ref{main2}. 

\smallskip

It is an easy and classical fact that, in order to create a small eigenvalue for the Neumann problem, it is sufficient to deform a domain locally, near a boundary point, as indicated by the mushroom-shaped region shown in the figure below. Up to a gauge transformation, we can suppose that the potential $A$ is locally $0$ in a neigborhood of  the mushroom, and we have to estimate the first eigenvalue of the Laplacian with Dirichlet boundary condition at the basis of the mushroom  (which is a  segment of length $\epsilon$) and Neumann boundary condition on the remaining part of its boundary, as required by Proposition \ref{upperharmonic}.
 
\begin{figure}[h]\centering
\includegraphics[width=70mm]{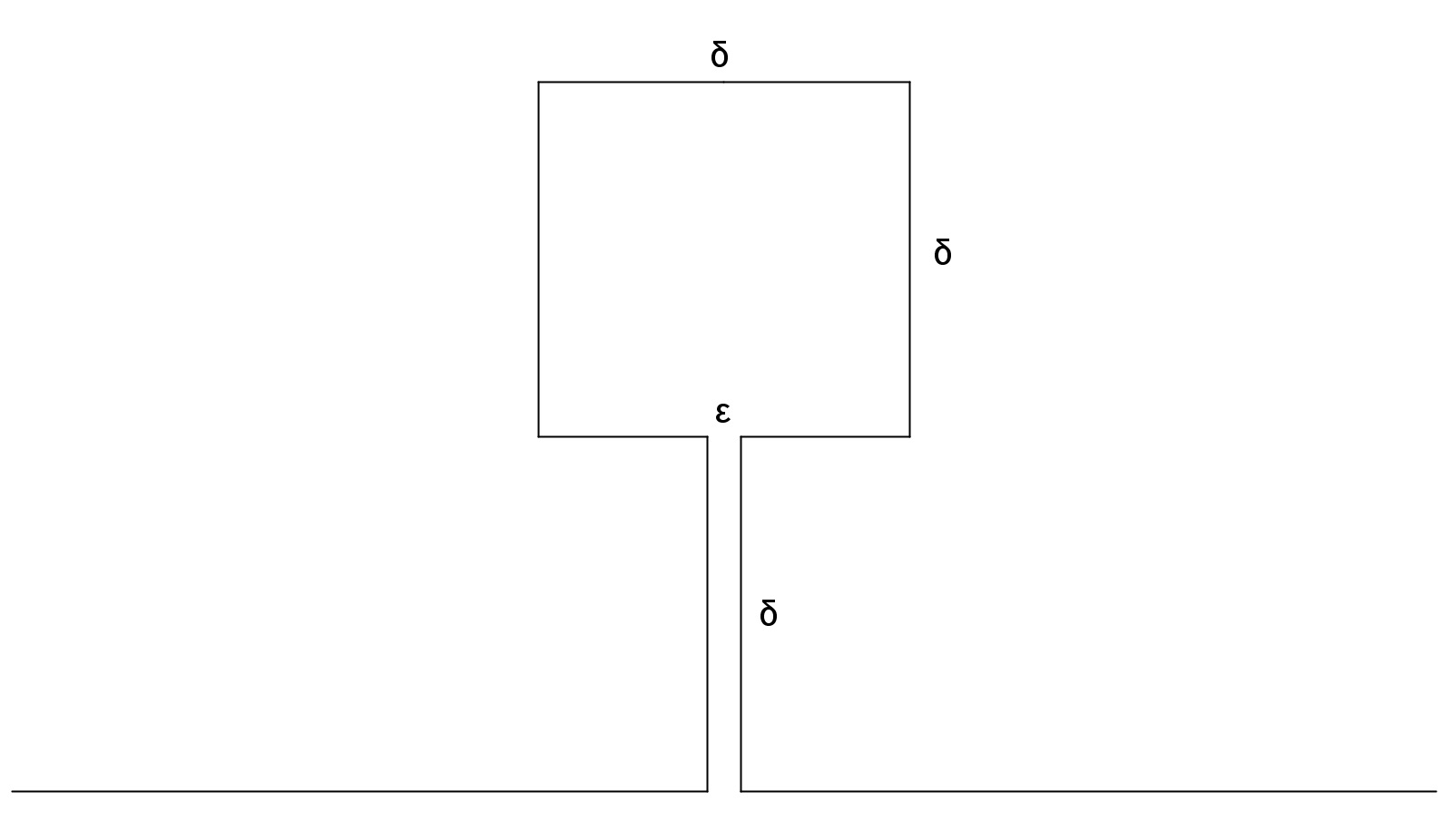}
\caption{A local deformation implying $\lambda_1 \to 0$}
\end{figure}
 
The only point is to take the value of the parameter $\epsilon$  much smaller than $\delta$ as $\delta \to 0$. Take for example $\epsilon =\delta^4$ and consider a function $u$ taking value $1$ in the square of size $\delta$ and passing linearly from $1$ to $0$ outside the rectangle of sizes $\epsilon,\delta$. The norm of the gradient of $u$ is $0$ on the square of size $\delta$ and $\frac{1}{\delta}$ in the rectangle of size $\delta,\epsilon$.

Then the Rayleigh quotient is
$$
R(u) \le \frac{\frac{1}{\delta^2}\delta \epsilon}{\delta^2}=\frac{\epsilon}{\delta^3}
$$
which tends to $0$ as $\delta \to 0$.
 
\medskip
Moreover, we can make such local deformation keeping the curvature of the boundary uniformly bounded in absolute value (see Example 2 in \cite{CGI}).
 
\end{ex}


\section{Appendix}

\subsection{Spectrum of circles and Riemannian products} \label{riemannian circle}

We first prove Proposition  \ref{circle}.

Let then $(M,g)$ be the circle of length $L$ with metric $g=\theta(t)^2dt^2$, where $t\in [0,L]$ and $\theta(t)$ is periodic of period $L$. Given the $1$-form $A=H(t)dt$ we first want to find the harmonic $1$-form $\omega$ which is cohomologous to $A$; that is, we look for  a smooth function $\phi$ so that 
$
\omega=A+d\phi
$
is harmonic. Now a unit tangent vector field to the circle is
$$
 e_1=\dfrac{1}{\theta} \dfrac{d}{dt}.
$$ 
Write  $\omega=G(t)\,dt$. Then
$$
\delta\omega=-\dfrac 1{\theta}\Big(\dfrac{G}{\theta}\Big)'.
$$
As any $1$-form on the circle is closed, we see that $\omega$ is harmonic iff $G(t)=c\theta(t)$ for a constant $c$.
We look for  $\phi$ and $c\in\reals$ so that
$$
\phi'=-H+c\theta.
$$
As $\phi$ must be periodic of period $L$, we must have $\int_0^L\phi'=0$. As the volume of $M$ is $L$, we also have $\int_0^L\theta=L$. This forces
$$
c=\dfrac{1}{L}\int_0^LH(t)\,dt.
$$
On the other hand,  as the curve $\gamma(t)=t$ parametrizes $M$ with velocity $\frac{d}{dt}$,  one sees that the flux of $A$ across $M$ is given by
$$
\Phi^A=\dfrac{1}{2\pi}\int_0^LH(t)\,dt.
$$
Therefore
$
c=\frac{2\pi}{L} \Phi^A
$
and a primitive could be
$$
\phi(t)=-\int_0^tH+c\int_0^t\theta.
$$
Conclusion:

\nero {\it The form $A=H(t)dt$ is cohomologous to the harmonic form $\omega=c\theta\, dt$ with $c=\frac{2\pi}{L} \Phi^A$}.

\medskip

We first compute the eigenvalues. By gauge invariance, we can use the potential $\omega$. In that case
$$
\Delta_{\omega}=-\nabla^{\omega}_{e_1}\nabla^{\omega}_{e_1}.
$$
Now
$$
\nabla^{\omega}_{e_1}u=\dfrac{u'}{\theta}-icu
$$
hence
$$
\nabla^{\omega}_{e_1}\nabla^{\omega}_{e_1}u=\dfrac{1}{\theta}\Big(\dfrac{u'}{\theta}-icu\Big)'-ic\Big(\dfrac{u'}{\theta}-icu\Big).
$$
After some calculation, the eigenfunction equation $\Delta_{\omega}u=\lambda u$ takes the form:
$$
-u''+\dfrac{\theta'}{\theta}u'+2ic \theta u'+c^2\theta^2 u=\lambda\theta^2 u.
$$
Recall the arc-length function $s(t)=\int_0^t\theta(\tau)\,d\tau$. We make the change of variables: 
$$
u(t)=v(s(t)), \quad\text{that is}\quad v=u\circ s^{-1}.
$$
Then:
$$
\twosystem
{u'=v'(s)\theta}
{u''=v''(s)\theta^2+v'(s)\theta'}
$$
and the equation becomes:
$$
-v''+2ic v'+c^2v=\lambda v
$$
with solutions :
$$
v_k(s)=e^{\frac{2\pi i k}{L}s}, \quad \lambda=\dfrac{4\pi^2}{L^2}(k-\Phi^A)^2, \quad k\in\bf Z.
$$
Now Gauge invariance says that 
$$
\Delta_{A+d\phi}=e^{i\phi}\Delta_Ae^{-i\phi};
$$
and  $v_k$ is an eigenfunction of $\Delta_{A+d\phi}$ iff $e^{-i\phi}v_k$ is an eigenfunction of $\Delta_A$. Hence, the eigenfunctions of $\Delta_A$ (where $A=H(t)\,dt$) are
$$
u_k=e^{-i\phi}v_k,
$$
where $\phi(t)=-\int_0^tH+c\, s(t)$ and $c=\frac{2\pi}{L}\Phi^A$. Explicitly:
\begin{equation}\label{eigenfunctions}
u_k(t)=e^{i\int_0^tH}e^{\frac{2\pi i(k-\Phi^A)s(t)}{L}}
\end{equation}
as asserted in Proposition \ref{circle}.

\smallskip

Let us know verify the last statement. If the metric is $g=dt^2$ then $\theta(t)=1$ and $s(t)=t$. If $A$ is a harmonic $1$-form then it has the expression $A=\frac{2\pi \Phi^A}{L}dt$. Taking into account \eqref{eigenfunctions} we indeed verify that $u_k(t)=e^{\frac{2\pi i k}{L}t}$.

\nero We now prove Proposition \ref{cyl}. 

\smallskip

Here we assume that $\Omega$ is a Riemannian product $[0,a]\times\sphere 1(\frac{L}{2\pi})$ with coordinates $(r,t)$ and the canonical metric on the circle. We fix a closed potential  $A$ on $\Omega$. By gauge invariance we can assume that $A$ is a Coulomb gauge, and by what we said above we have easily
$$
A=\dfrac{2\pi \Phi^A}{L}\,dt.
$$
Then $A$ restrict to zero on $[0,a]$; as $A(N)=0$ on $\bd\Omega$ the magnetic Neumann conditions reduce simply to $\derive{u}{N}=0$. At this point we apply a standard argument of separation of variables; if $\phi(r)$ is an eigenfunction of the usual Neumann Laplacian on $[0,a]$, and $v(t)$ is an eigenfunction of $\Delta_A$ on $\sphere 1(\frac{L}{2\pi})$, we see that the product $u(r,t)=\phi(r)v(t)$ is indeed an eigenfunction of $\Delta_A$ on $\Omega$. As the set of eigenfunctions we obtain that way is a complete orthonormal system in $L^2(\Omega)$, we see that each eigenvalue of the product is the sum of an eigenvalue in the Neumann spectrum of $[0,a]$ and an eigenvalue in the magnetic spectrum of the circle, as computed before. We omit further details.


\subsection{Proof of Lemma \ref{calculus}} \label{technical lemma} For simplicity of notation, we give the proof when $a=L=1$. This will not affect generality. Then, assume that $s : [0,1]\times [0,1]\to\reals$ is smooth, non-negative and satisfies 
$$
s(0,t)=t,\quad s(r,0)=0,\quad s(r,1)=1\quad\text{and}\quad \derive st(r,t)\doteq \theta(r,t)>0.
$$
Assume the identity
\begin{equation}\label{identity}
F(t)=p(r)\cos(\pi s(r,t))+q(r)\sin(\pi s(r,t))
\end{equation}
for real-valued functions $F(t),p(r),q(r)$, such that $p(r)^2+q(r)^2>0$.  Then we must show:
\begin{equation}\label{sr}
\derive sr=0
\end{equation}
everywhere. 
\medskip

Differentiate \eqref{identity} with respect to $t$ and get:
\begin{equation}\label{fprime}
F'(t)=-\pi p(r)\theta(r,t)\sin(\pi s)+\pi q(r)\theta(r,t)\cos(\pi s)
\end{equation}
and we have the following matrix identity
$$
\twomatrix{\cos(\pi s)}{\sin(\pi s)}{-\pi\theta\sin(\pi s)}{\pi\theta\cos(\pi s)}\Due{p}{q}=\Due{F}{F'}.
$$
We then see:
$$
p(r)=F(t)\cos(\pi s)-\dfrac{F'(t)}{\pi\theta}\sin(\pi s).
$$
Set $t=0$ so that $s=0$ and $p(r)=F(0)\doteq p$ is constant; the previous identity becomes 
\begin{equation}\label{p}
p=F(t)\cos(\pi s)-\dfrac{F'(t)}{\pi\theta}\sin(\pi s).
\end{equation}
Observe that:
\begin{equation}\label{changes}
\twosystem
{F'(0)=\pi q(r)\theta(r,0)}
{F'(1)=-\pi q(r)\theta(r,1)}
\end{equation}
\nero Assume $F'(0)=0$. Then, as $\theta(t,r)$ is positive one must have $q(r)=0$ for all $r$, hence $p\ne 0$ and 
$
F(t)=p\cos(\pi s),
$
from which, differentiating with respect to $r$, one gets easily
$
\derive{s}{r}=0
$
and we are finished. 

\nero We now assume that $F'(0)\ne 0$: then we see from \eqref{changes} that $q$ is not identically zero and the smooth function $F':[0,1]\to\reals$ changes sign. This implies that

\nero {\it  there exists $t_0\in (0,1)$ such that $F'(t_0)=0$.}

\smallskip

Now \eqref{p} evaluated at $t=t_0$ gives:
$$
p=F(t_0)\cos(\pi s(r,t_0))
$$
for all $r$. Differentiate w.r.t. $r$ and get, for all $r\in [0,1]$:
$$
0=\sin(\pi s(r,t_0))\derive{s}{r}(r,t_0).
$$ 
Since $s(r,t)$ is increasing in $t$, we have 
$$
0<s(r,t_0)<s(r,1)=1.
$$
Hence $\sin(\pi s(r,t_0))>0$ and we get
$$
\derive{s}{r}(r,t_0)=0.
$$
 \eqref{identity} writes:
$$
F(t)=p\cos(\pi s)+q(r)\sin(\pi s),
$$
and then, differentiating w.r.t. $r$:
$$
0=-p\pi \sin(\pi s)\derive sr+q'(r)\sin(\pi s)+\pi q(r)\cos(\pi s)\derive sr.
$$
Evaluating at $t=t_0$ we obtain
$
0=q'(r)\sin(\pi s(r,t_0))
$
which implies
$$
q'(r)=0
$$
hence $q(r)=q$, a constant. We conclude that
$$
F(t)=p\cos(\pi s)+q\sin(\pi s)
$$
for constants $p,q$.  We differentiate the above w.r.to $r$ and get:
$$
0=\Big(-\pi p \sin(\pi s)+\pi q\cos(\pi s)\Big)\derive sr
$$
for all $(r,t)\in [0,1]\times [0,1]$. Now,  the expression inside parenthesis is non-zero a.e. on the square. Then one must have $\derive sr=0$ everywhere and the final assertion follows.

\addcontentsline{toc}{chapter}{Bibliography}
\bibliographystyle{plain}
\bibliography{CS}

\begin{thebibliography}{10}

\bibitem{AFNN}
L.~Abatangelo, V.~Felli, B.~Noris, and M.~Nys.
\newblock Sharp boundary behavior of eigenvalues for {A}haronov-{B}ohm
  operators with varying poles.
\newblock {\em arXiv:1605.09569}, 2016.

\bibitem{BBC}
W.~Ballmann, J.~Br\"{u}ning, and G.~Carron.
\newblock Eigenvalues and holonomy.
\newblock {\em Int. Math. Res. Not.}, 12:657--665, 2003.

\bibitem{BDP}
V.~Bonnaillie-No\"{e}l, M.~Dauge, and N.~Popoff.
\newblock {\em Ground state energy of the magnetic { L}aplacian on general
  three-dimensional domains}, volume 145.
\newblock 2016.

\bibitem{BH}
V.~Bonnaillie-No\"{e}l and B.~Helffer.
\newblock Nodal and spectral minimal partitions -- {T}he state of the art in
  2015.
\newblock {\em arXiv:1506.07249}, 2015.

\bibitem{CF}
I.~Chavel and E.~Feldman.
\newblock Cylinders on surfaces.
\newblock {\em Comment. Math. Helv.}, 53:439--447, 1978.

\bibitem{CGI}
B.~Colbois, A.~Girouard, and M.~Iversen.
\newblock Uniform stability of the {D}irichlet spectrum for rough outer
  perturbations.
\newblock {\em J. Spectr. Theory}, 3:575--599, 2013.

\bibitem{ELMP}
M.~Egidi, S.~Liu, F.~M\"{u}nch, and N.~Peyerimhoff.
\newblock Ricci curvature and eigenvalue estimates for the magnetic {L}aplacian
  on manifolds.
\newblock {\em arXiv:1608.01955}, 2016.

\bibitem{Er1}
L.~Erd\"{o}s.
\newblock Rayleigh-type isoperimetric inequality with a homogeneous magnetic
  field.
\newblock {\em Calc. Var. Partial Differential Equations}, 4:283--292, 1996.

\bibitem{HHHO}
B.~Helffer, M.~Hoffmann-Ostenhof, T.~Hoffmann-Ostenhof, and M.~P. Owen.
\newblock Nodal sets for groundstates of {S}chr\"odinger operators with zero
  magnetic field in non-simply connected domains.
\newblock {\em Comm. Math. Phys.}, 202:629--649, 1999.

\bibitem{LLPP}
C.~Lange, S.~Liu, N.~Peyerimhoff, and O.~Post.
\newblock Frustration index and {C}heeger inequalities for discrete and
  continuous magnetic {L}aplacian.
\newblock {\em Calc. Var. Partial Differential Equations}, 54:4165–4196,
  2015.

\bibitem{LS}
R.~Laugesen and B.A. Siudeja.
\newblock Magnetic spectral bounds on starlike plane domains.
\newblock {\em ESAIM Control Optim. Calc. Var.}, 21:670–689, 2015.

\bibitem{NNT}
B.~Noris, M.~Nys, and S.~Terracini.
\newblock On the eigenvalues of {A}haronov-{B}ohm operators with varying poles:
  pole approaching the boundary of the domain.
\newblock {\em Comm. Math. Phys.}, 339:1101--1146., 2015.

\bibitem{Sh}
I.~Shigekawa.
\newblock Eigenvalue problems for the {S}chr\"{o}dinger operator with the
  magnetic field on a compact {R}iemannian manifold.
\newblock {\em J. Funct. Anal.}, 75:92--127, 1987.

\end{thebibliography}

 \bigskip

\normalsize 
\noindent Bruno Colbois

\noindent Universit\'e de Neuch\^atel, Institut de Math\'ematiques \\
Rue Emile Argand 11\\
 CH-2000, Neuch\^atel, Suisse

\noindent bruno.colbois@unine.ch

\bigskip

\noindent Alessandro Savo

\noindent Dipartimento SBAI, Sezione di Matematica \\
Sapienza Universit\`a di Roma,  
Via Antonio Scarpa 16\\
00161 Roma, Italy

\noindent alessandro.savo@sbai.uniroma1.it

\end{document}